\documentclass[UTF8,11pt]{article}
\usepackage{amssymb,amsmath,color,amsfonts,float,amscd,amsthm,bm,mathrsfs,array,tikz,dutchcal}
\usepackage[title]{appendix}
\usepackage[colorlinks,urlcolor=blue,citecolor=blue,linkcolor=blue]{hyperref}
\usepackage{ulem}

% font used for R in Real numbers
\newcommand*{\circled}[1]{\lower.7ex\hbox{\tikz\draw (0pt, 0pt)%
    circle (.5em) node {\makebox[1em][c]{\small #1}};}}
\textwidth=160truemm \textheight=210truemm \evensidemargin=0mm
\oddsidemargin=0mm \topmargin=0mm \headsep=0mm
\parindent=2em
 \allowdisplaybreaks

%% redefinition of control sequences
\catcode`!=11
\let\!int\int \def\int{\displaystyle\!int}
\let\!lim\lim \def\lim{\displaystyle\!lim}
\let\!sum\sum \def\sum{\displaystyle\!sum}
\let\!sup\sup \def\sup{\displaystyle\!sup}
\let\!inf\inf \def\inf{\displaystyle\!inf}
\let\!cap\cap \def\cap{\displaystyle\!cap}
\let\!max\max \def\max{\displaystyle\!max}
\let\!min\min \def\min{\displaystyle\!min}

\catcode`!=12

%%%%%%%%%%%%%%%%%%%%%%%%%%%%%%%%%%%%%%%%%%%%%%%%%%%%%%%%%%%%%%%%%%
\let\oldsection\section
\renewcommand\section{\setcounter{equation}{0}\oldsection}

\allowdisplaybreaks

\newtheorem{defn}{Definition}[section]
\newtheorem{thm}{Theorem}[section]
\newtheorem{prop}[thm]{Proposition}
\newtheorem{lem}[thm]{Lemma}
\newtheorem{cor}[thm]{Corollary}
\newtheorem{rem}[thm]{Remark}

\newcommand{\RNum}[1]{\uppercase\expandafter{\romannumeral #1\relax}}
\newcommand{\be}{\begin{equation}}
\newcommand{\ee}{\end{equation}}
\newcommand{\bes}{\begin{eqnarray*}}
\newcommand{\ees}{\end{eqnarray*}}

\begin{document}
%%%%%%%%%%%%%%%%%%%% title %%%%%%%%%%%%%%%%%%%%%%%%%%%%%%%%%%%%%%%%%%%%%%%%%%

\title{Sharp interface limit for inhomogeneous incompressible Navier-Stokes/Allen-Cahn system in a bounded domain via a relative energy method}
\author{Song Jiang\footnote{Institute of Applied Physics and Computational Mathematics. Email: jiang@iapcm.ac.cn}
\and
Xiangxiang Su\footnote{School of Mathematical Sciences, Shanghai Jiao Tong University, Shanghai 200240, P. R. China. Email: sjtusxx@sjtu.edu.cn}
\and
Feng Xie
\footnote{Corresponding author.
School of Mathematical Sciences, and CMA-Shanghai, Shanghai Jiao Tong University, Shanghai 200240, P. R. China.
Email: tzxief@sjtu.edu.cn}
}
\date{}
\maketitle

\noindent{\bf Abstract:}
This paper concerns the sharp interface limit of solutions to the inhomogeneous incompressible Navier-Stokes/Allen-Cahn coupled system in a bounded domain $\Omega \subset  \mathbb{R}^n,\ n =2,3$. Based on a relative energy method, we prove that the solutions to the Navier-Stokes/Allen-Cahn system converge to the corresponding solutions to a sharp interface model provided that the thickness of the diffuse interfacial zone goes to zero. It is noted that the relative energy method can avoid both the spectral estimates of the linearized Allen-Cahn operator and the construction of approximate solutions by the matched asymptotic expansion method in the study of the sharp interface limit process. And some suitable functionals are designed and estimated by elaborated energy methods accordingly.

\vskip0.2cm

\noindent {\bf Keywords:} Sharp interface limit; inhomogeneous incompressible Navier-Stokes equations; Allen-Cahn equation; relative energy method.

\section{Introduction and Main Results}
\hspace{2em}
The widely accepted diffuse interface model of a two-phase flow is called as ``Model H", which describes the motion of two macroscopically immiscible incompressible viscous Newtonian fluids with matched density. It was usually governed by the Navier-Stokes/Cahn-Hilliard coupled system and was introduced in \cite{Gurtin, Hohenberg}. One can refer to \cite{Elliott C. M.} and the references cited therein for more physical background. For model H, \cite{Abels H0} discussed the  convergence of solutions to the diffuse interface model with different densities to the sharp interface model, while \cite{Abels H1} prescribed an evolution law for the interface that takes the diffusion effects into account. In Model H, the order parameter is in accordance with the law of conservation of mass. Otherwise, it becomes the Allen-Cahn equation. One can refer to \cite{Chen X0} for the derivation of these two models. And the sharp interface limit for the two-phase flow of incompressible viscous fluids is an interesting but challenging problem and has been widely studied. In \cite{Liu Y N} the authors constructed an approximate solution by the matched asymptotic expansion method and proved that the solution to the Stokes/Allen-Cahn system converges to the related solution to a sharp interface model, while \cite{Fei} studied the similar limit problem for the Navier-Stokes/Allen-Cahn system. Also refer to \cite{JSX} by the same authors of this paper.  Recently, the relative entropy method, which calculates the quantitative stability of the appropriate distance between two solutions, was popularly applied to study the sharp interface limit problems.
For example, for the Allen-Cahn equation of the mean curvature flow, the convergence rate of sharp interface limit was given in \cite{Fischer J.}. Since such a method does not depend on the spectral analysis of the linearized Allen-Cahn operator and on the comparison principle, so that it could be applied to other much more complex models, such as the Navier-Stokes/Allen-Cahn coupled system. For instance, \cite{Sebastian Hensel} studied the sharp interface limit in both 2D and 3D bounded domains. We also refer to \cite{Fischer J1} for the related study, while the planar multi-phase mean curvature flow  was
analyzed in \cite{Fischer J2}.
The main purpose of this paper is to establish these results for the inhomogeneous incompressible Navier-Stokes/Allen-Cahn coupled system in a general parameter regime where the variation of density is also involved. Moreover, the curvature is not included in the kinetic boundary condition for the related free boundary value problem. We can justify the sharp interface limit for this case by the relative entropy method.

We consider the phase field model based on the second-order Allen-Cahn operator rather than the H model. This model also has been studied in \cite{Chen M, Li Y} etc.  More specifically, we are concerned with the inhomogeneous incompressible Navier-Stokes/Allen-Cahn coupled system in a smooth bounded domain $\Omega \subseteq \mathbb{R}^n,\ n=2, 3$:
\begin{subequations} \label{9.15.1}
\begin{align}
\rho_{\varepsilon} \partial_t\mathbf{v}_\varepsilon + \rho_{\varepsilon} \mathbf{v}_\varepsilon \cdot \nabla \mathbf{v}_\varepsilon - \Delta \mathbf{v}_{\varepsilon}+\nabla p_{\varepsilon} &=-\varepsilon \operatorname{div} (\nabla c_{\varepsilon} \otimes \nabla c_{\varepsilon}-\frac{1}{2}|\nabla c_{\varepsilon}|^2I) && \text { in } \Omega \times(0, T_{1}), \label{9.6.4}\\
\partial_t\rho_{\varepsilon}+\operatorname{div} (\rho_{\varepsilon}\mathbf{v}_{\varepsilon}) &=0 && \text { in } \Omega \times (0, T_{1}), \label{2.17.1}\\
\operatorname{div} \mathbf{v}_{\varepsilon} &=0 && \text { in } \Omega \times (0, T_{1}), \\
\rho_{\varepsilon} \partial_{t} c_{\varepsilon}+\rho_{\varepsilon}  \mathbf{v}_{\varepsilon} \cdot \nabla c_{\varepsilon} &=-m_{\varepsilon} \mu_{\varepsilon} && \text { in } \Omega \times (0, T_{1}), \label{10.19.1}\\
\rho_{\varepsilon} \mu_{\varepsilon} &=- {\varepsilon} \Delta c_{\varepsilon}+\frac{\rho_{\varepsilon} }{\varepsilon} f^{\prime}\left(c_{\varepsilon}\right) & & \text { in } \Omega \times\left(0, T_{1}\right), \label{9.11.1}\\
\rho_{\varepsilon}(\cdot, 0) =\rho_{\varepsilon, 0} > 0, \quad \mathbf{v}_{\varepsilon}(\cdot, 0) &=\mathbf{v}_{\varepsilon, 0}, \quad c_{\varepsilon}(\cdot, 0) =c_{\varepsilon, 0} \in[-1,1] & & \text { in } \Omega, \\
\left(\mathbf{v}_{\varepsilon},c_{\varepsilon}, \mu_{\varepsilon}\right) &=(0,-1,0) & & \text { on } \partial \Omega \times\left(0, T_{1}\right) \label{9.28.1},
\end{align}
\end{subequations}
where $\mathbf{v}_{\varepsilon}$, $ p_{\varepsilon}$, $\rho_{\varepsilon}$ are velocity vector, pressure and density of the fluid respectively. $c_{\varepsilon}$ denotes the order parameter and $\mu_{\varepsilon}$ stands for chemical potential, $m_{\varepsilon}\triangleq m_0 \,\varepsilon^\theta$ expresses a mobility coefficient.
And $\varepsilon$ is a small positive parameter which represents the ``width" of the interfacial region and $f$ is a double well potential. In general, $f$  satisfies
\begin{eqnarray*}
f \in C^\infty(\mathbb{R}),\quad f'(\pm1)=0,\quad f''(\pm1)> 0,\quad f(c)=f(-c)> 0, \quad \forall c \in(-1,1).
\end{eqnarray*}
A typical example $f(c)=\frac{1}{8}(1-c^2)^2$. However, our analysis will not depend on the explicit expression of $f$.

Notice that \cite{Abels H} gave a (non-)convergence result under the condition that $\theta>1$. In this paper we will focus on the related convergence result provided that $\theta\in (-\frac{1}{4}, 1]$. Without loss of generality, we only deal with the case that $m_0=1$ and $\theta=0$ for simplicity in the following discussion. It is direct to check that our results can be extended to the case that $\theta\in (-\frac{1}{4}, 1]$ without increasing any difficulty in analysis which will be explained in details in Remark \ref{RE}.

As is known that the solution $c_{\varepsilon}$ to the Allen-Cahn equation converges to a function $2\chi-1$ as $\varepsilon$ tends to 0. And an important fact about $c_{\varepsilon}$ is that
$$
-1 \le c_{\varepsilon}(x,t) \le 1 \quad \text{ for any }  \varepsilon \ge 0, \, x \in \Omega, \, t \in (0,T_1).
$$

Below, we will show that the sharp interface limit of (\ref{9.15.1}) is the following free boundary value problem as the parameter $\varepsilon$ tends to zero.
\begin{subequations} \label{3.3.1}
\begin{align}
\rho \partial_t\mathbf{v}+\rho \mathbf{v}\cdot \nabla \mathbf{v}-\Delta \mathbf{v}+\nabla p & =0 && \text { in } \Omega^{\pm}(t),\  t \in (0, T_{0}), \label{11.8.1}\\
\partial_t\rho+\operatorname{div} (\rho \mathbf{v}) &=0  && \text { in } \Omega(t),\  t \in (0, T_{0}), \label{2.20.11}\\
\operatorname{div} \mathbf{v} & =0 && \text { in } \Omega(t),\  t \in (0, T_{0}), \\
\partial_t\chi+ \mathbf{v}\cdot \nabla \chi& =0 && \text { in } \Omega(t),\  t \in (0, T_{0}), \\
{[2 D \mathbf{v}-p \mathbf{I}] \mathbf{n}_{\Gamma_{t}}} & =c_0 H_{\Gamma_{t}} \mathbf{n}_{\Gamma_{t}} && \text { on } \Gamma_{t},\  t \in (0, T_{0}), \label{9.17.1}\\
{[\mathbf{v}]} & =0 && \text { on } \Gamma_{t},\  t \in (0, T_{0}), \\
V_{\Gamma_{t}}-\mathbf{n}_{\Gamma_{t}} \cdot \mathbf{v}|_{\Gamma_{t}} & =0 && \text { on } \Gamma_{t},\  t \in (0, T_{0}) \label{11.8.2},\\
\rho(\cdot, 0) =\rho_{0}>0, \quad \mathbf{v}(\cdot, 0) &=\mathbf{v}_{0}, \quad \chi(\cdot, 0) =\chi_{0} & & \text { in } \Omega, \\
(\mathbf{v},\chi) &=(0,0) & & \text { on } \partial \Omega \times (0, T_{0}) \label{9.28.1}.
\end{align}
\end{subequations}
Here $\Omega$ is closed by two smooth domains $\Omega^\pm(t)$ which are separated by a free boundary $\Gamma_t$  for each $t\in(0, T_0)$.  $D \mathbf{v}=\frac{1}{2}(\nabla\mathbf{v}+(\nabla\mathbf{v})^{\top})$ is the stress tensor. Moreover, $V_{\Gamma_{t}}$ and $H_{\Gamma_t}$ are the normal velocity and (mean) curvature of the interface $\Gamma_t$, and $\mathbf{n}_{\Gamma_{t}}$ is the outward normal vector to $\Omega^-(t)$. The definition of $c_0$ is given by $c_0=\int_{-1}^{1} \sqrt{2 f(r)} \,\mathrm{d} r $.

In (\ref{9.17.1}), $[h]$ means the jump of $h$ across $\Gamma_t$ whose definition is given in the following:
$$
[h](p, t)=\lim _{d \rightarrow 0+}[h\left(p+\mathbf{n}_{\Gamma_{t}}(p) d\right)-h\left(p-\mathbf{n}_{\Gamma_{t}}(p) d\right)].
$$
Finally, the transport equations (\ref{2.17.1}) and (\ref{2.20.11}) ensure that
\begin{align}
0 < \|\rho_{\varepsilon}(t)\|_{L^\infty}=\|\rho_{\varepsilon,0}\|_{L^\infty} < \infty, \quad 0 < \|\rho(t)\|_{L^\infty}=\|\rho_0\|_{L^\infty} < \infty.
\end{align}

%Inspired by \cite{Feireisl E, Feireisl E1, Germain P}
Now, we introduce the relative energy for the models (\ref{9.15.1}) and (\ref{3.3.1}) as follows:
\begin{align} \label{8.23.6}
E \left[\rho_{\varepsilon},\mathbf{v}_{\varepsilon}, c_{\varepsilon} \mid \rho, \mathbf{v}, \chi\right]
\triangleq \frac{1}{2} \int_{\Omega}(\rho_{\varepsilon}-\rho)^2 + \rho_{\varepsilon}\left|\mathbf{v}_{\varepsilon}-\mathbf{v} \right|^{2}\,\mathrm{d} x+ \int_{\Omega} \frac{\varepsilon}{2}\left|\nabla c_{\varepsilon}\right|^{2}+\frac{\rho_{\varepsilon}}{\varepsilon} f\left(c_{\varepsilon}\right)-\boldsymbol{\xi} \cdot \nabla \psi_{\varepsilon}\,\mathrm{d} x,
\end{align}
and we also define the measure for the difference in the phase indicators as
\begin{align} \label{9.13.1}
E_{\mathrm{vol}}\left[c_{\varepsilon} \mid \chi\right] \triangleq \int_{\Omega}|c_0 \chi-\psi_{\varepsilon}||\vartheta(d_\Gamma)| \,\mathrm{d} x,
\end{align}
where
\begin{align} \label{9.12.1}
\psi_{\varepsilon}=\int_{-1}^{c_{\varepsilon}} \sqrt{2 \rho_\varepsilon(r) f(r)} \,\mathrm{d} r. \end{align}
The vector field $\boldsymbol{\xi}$ in (\ref{8.23.6}) is a suitable extension of the unit normal vector field  $\mathbf{n}_{\Gamma_{t}}$, and $\vartheta$ in (\ref{9.13.1}) will be a smooth truncation of the signed distance function. The definitions and more properties of $\boldsymbol{\xi}$  and $\vartheta$ are introduced in details in Section 2.

\begin{thm} \label{thm1.1}
Assume that the system of equations (\ref{9.15.1}) admits a solution $(\rho_{\varepsilon},\mathbf{v}_{\varepsilon},c_{\varepsilon},\mu_{\varepsilon})$ on a time interval $[0, T_1]$ with $T_{1} \in(0, \infty)$, and $(\rho,\mathbf{v},\chi)$ is a strong solution to the sharp interface limit model (\ref{3.3.1}) in $[0, T_0]\ (T_0\leq T_1)$ in the sense of Definition \ref{def2.1}. Moreover, the initial data satisfy the assumption
\begin{align} \label{9.15.2}
E \left[\rho_{\varepsilon},\mathbf{v}_{\varepsilon}, c_{\varepsilon} \mid \rho, \mathbf{v}, \chi\right](0)+E_{\mathrm{vol}}\left[c_{\varepsilon} \mid \chi\right](0) \le C_0 \varepsilon^\frac{1}{3}, \quad \forall \varepsilon \in(0,1)
\end{align}
for some $C_0>0$, where $E \left[\rho_{\varepsilon},\mathbf{v}_{\varepsilon}, c_{\varepsilon} \mid \rho, \mathbf{v}, \chi\right]$ and $E_{\mathrm{vol}}\left[c_{\varepsilon} \mid \chi\right]$ are defined in(\ref{8.23.6}) and (\ref{9.13.1}) respectively. Then there exists a time $T \in (0,T_0)$ and constants $C=C(\mathbf{v},\chi,T)>0,\ \varepsilon_0\in(0,1]$, such that the following estimate
\begin{align} \label{2.21.7}
E \left[\rho_{\varepsilon},\mathbf{v}_{\varepsilon}, c_{\varepsilon} \mid \rho, \mathbf{v}, \chi\right](t) + E_{\mathrm{vol}}\left[c_{\varepsilon} \mid \chi\right](t) \le C \varepsilon^\frac{1}{3}
\end{align}
holds true for any $\varepsilon \in(0,\varepsilon_0)$ and almost every $t\in(0,T)$.

Furthermore,
\begin{align*}
&\frac{1}{3} \| \nabla (\mathbf{v}_{\varepsilon}-\mathbf{v})\|_{L^2((0,t)\times \Omega)}^2+\frac{1}{3} \| \mu_\varepsilon\|_{L^2((0,t)\times \Omega)}^2\\
+&\int_0^t \int_{\Omega} \frac{1}{2}(\operatorname{div} \boldsymbol{\xi} \,\sqrt{\frac{2 f}{\rho_\varepsilon }} + \mu_\varepsilon )^2 \, \mathrm{d} x \, \mathrm{d} t +\int_0^t \int_{\Omega} \varepsilon (\frac{\mu_\varepsilon }{2} +\frac{\vartheta}{\rho_{\varepsilon}} |\nabla c_{\varepsilon}|)^2 \,\mathrm{d} x \, \mathrm{d} t \le C \varepsilon^\frac{1}{3}.
\end{align*}
\end{thm}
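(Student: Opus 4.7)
The plan is to follow the relative energy method: differentiate the combined functional $E[\rho_\varepsilon,\mathbf{v}_\varepsilon,c_\varepsilon\mid\rho,\mathbf{v},\chi](t) + E_{\mathrm{vol}}[c_\varepsilon\mid\chi](t)$ along solutions of (\ref{9.15.1}) and (\ref{3.3.1}), identify the coercive dissipative terms on the left-hand side (exactly the ones appearing in the second claim of the theorem), and bound the remainder by $C\,(E+E_{\mathrm{vol}}) + C\varepsilon^{1/3}$; Gronwall combined with the initial bound (\ref{9.15.2}) then gives (\ref{2.21.7}), while the dissipation estimate falls out of the same differential inequality before integration. For Step 1 (density and kinetic part), using (\ref{2.17.1}) and (\ref{2.20.11}) together with the smoothness of $\rho$, I would rewrite $\frac{d}{dt}\int_\Omega\frac12(\rho_\varepsilon-\rho)^2\,\mathrm{d} x$ as a transport identity producing commutator terms $(\rho_\varepsilon-\rho)(\mathbf{v}_\varepsilon-\mathbf{v})\cdot\nabla\rho$ absorbable by $E$. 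Testing (\ref{9.6.4}) against $\mathbf{v}_\varepsilon-\mathbf{v}$ and subtracting the corresponding identity for (\ref{11.8.1}) yields
\begin{align*}
\frac{d}{dt}\int_\Omega \tfrac12\rho_\varepsilon|\mathbf{v}_\varepsilon-\mathbf{v}|^2\,\mathrm{d} x + \int_\Omega|\nabla(\mathbf{v}_\varepsilon-\mathbf{v})|^2\,\mathrm{d} x = -\varepsilon\int_\Omega\nabla(\mathbf{v}_\varepsilon-\mathbf{v}):\bigl(\nabla c_\varepsilon\otimes\nabla c_\varepsilon-\tfrac12|\nabla c_\varepsilon|^2 I\bigr)\,\mathrm{d} x + R_{\mathrm{kin}},
\end{align*}
with $R_{\mathrm{kin}}$ a sum of convective errors quadratic in $\mathbf{v}_\varepsilon-\mathbf{v}$ and linear in $\rho_\varepsilon-\rho$, directly controlled by $E$.

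For Step 2 (phase-field part and surface-tension cancellation), I would differentiate $\int\tfrac{\varepsilon}{2}|\nabla c_\varepsilon|^2 + \tfrac{\rho_\varepsilon}{\varepsilon}f(c_\varepsilon)\,\mathrm{d} x$ and insert (\ref{10.19.1})-(\ref{9.11.1}); after an integration by parts this produces the dissipation $\int\mu_\varepsilon^2\,\mathrm{d} x$ plus a capillary-stress contribution whose divergence form exactly cancels the right-hand side of the identity in Step 1. The structural contribution is $-\frac{d}{dt}\int\boldsymbol{\xi}\cdot\nabla\psi_\varepsilon\,\mathrm{d} x$: writing $\nabla\psi_\varepsilon = \sqrt{2\rho_\varepsilon f(c_\varepsilon)}\,\nabla c_\varepsilon$ plus density-gradient corrections and exploiting the transport-type evolution of $\boldsymbol{\xi}$ along $\mathbf{v}$ (developed in Section~2), an integration by parts matches the calibrated sharp-interface surface tension and lets one complete the squares
\begin{align*}
\tfrac12\bigl(\operatorname{div}\boldsymbol{\xi}\,\sqrt{2f/\rho_\varepsilon}+\mu_\varepsilon\bigr)^2 \qquad\text{and}\qquad \varepsilon\bigl(\tfrac12\mu_\varepsilon+\vartheta|\nabla c_\varepsilon|/\rho_\varepsilon\bigr)^2,
\end{align*}
which are precisely the two extra dissipative quantities appearing in the theorem, modulo remainders of the form $\sqrt{E}\cdot\sqrt{\mathrm{dissipation}}$ and lower-order geometric contributions proportional to $E + E_{\mathrm{vol}} + \varepsilon^{1/3}$.

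For Step 3 (volume error and closure), I would differentiate $E_{\mathrm{vol}}[c_\varepsilon\mid\chi]$ using that $\chi$ solves a transport equation and that $\psi_\varepsilon$ satisfies an equation derived from (\ref{10.19.1}); this gives $|\tfrac{d}{dt}E_{\mathrm{vol}}|\le C\|\mu_\varepsilon\|_{L^2}\sqrt{E+E_{\mathrm{vol}}} + C(E+E_{\mathrm{vol}}) + C\varepsilon^{1/3}$, with the first term absorbed into the Allen-Cahn dissipation via Young's inequality. Summing Steps 1-3 produces a differential inequality $\frac{d}{dt}(E+E_{\mathrm{vol}}) + (\text{all dissipations}) \le C(E+E_{\mathrm{vol}}) + C\varepsilon^{1/3}$; Gronwall on $(0,T)$ yields (\ref{2.21.7}), and integration of the dissipative terms yields the second claim.

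The principal difficulty is performing the surface-tension cancellation in the \emph{inhomogeneous} setting: because $\rho_\varepsilon$ appears inside $\psi_\varepsilon$ through $\sqrt{2\rho_\varepsilon f(c_\varepsilon)}$ and is transported by $\mathbf{v}_\varepsilon$ rather than by the limit velocity $\mathbf{v}$, the clean matched-density cancellation between the capillary stress of Step 1 and the $\boldsymbol{\xi}\cdot\nabla\psi_\varepsilon$ derivative of Step 2 is recovered only modulo cross-terms containing $\nabla\rho_\varepsilon$ and $\mathbf{v}_\varepsilon-\mathbf{v}$. Retaining the $\varepsilon^{1/3}$ rate forces carefully weighted Young inequalities that preserve the coercive squares on the left, systematic use of the pointwise BV-type bound $\tfrac{\varepsilon}{2}|\nabla c_\varepsilon|^2 + \tfrac{\rho_\varepsilon}{\varepsilon}f(c_\varepsilon) \ge |\nabla\psi_\varepsilon| \ge \boldsymbol{\xi}\cdot\nabla\psi_\varepsilon$ to bound the defect in the remainder, and precise geometric control of $\boldsymbol{\xi}$ and $\vartheta$ near $\Gamma_t$ to match the orders of vanishing in the capillary layer.
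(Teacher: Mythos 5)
Your overall strategy coincides with the paper's: the same relative-energy functional, the same decomposition into a density/kinetic part, a phase-field part in which the capillary stress is cancelled against the time derivative of $\boldsymbol{\xi}\cdot\nabla\psi_{\varepsilon}$, the same two completed squares $\frac{1}{2}(\operatorname{div}\boldsymbol{\xi}\,\sqrt{2f/\rho_{\varepsilon}}+\mu_{\varepsilon})^2$ and $\varepsilon(\frac{\mu_{\varepsilon}}{2}+\frac{\vartheta}{\rho_{\varepsilon}}|\nabla c_{\varepsilon}|)^2$, a separate differential inequality for $E_{\mathrm{vol}}$ closed by absorbing the $\varepsilon\mu_{\varepsilon}^2$ contribution into the Allen--Cahn dissipation, and Gronwall at the end. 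Your Steps 1--2 and Step 3 are, respectively, Propositions \ref{prop3.2} and \ref{pro3.2} of the paper.

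There is, however, one genuine gap: you never account for the origin of the rate $\varepsilon^{1/3}$. Completing the square $\frac{1}{2}(\operatorname{div}\boldsymbol{\xi}\,\sqrt{2f/\rho_{\varepsilon}}+\mu_{\varepsilon})^2$ leaves the term $\int_0^T\!\int_{\Omega}\frac{1}{2}(\operatorname{div}\boldsymbol{\xi})^2\,\frac{2f(c_{\varepsilon})}{\rho_{\varepsilon}}\,\mathrm{d}x\,\mathrm{d}t$ on the right-hand side with a positive sign, and this term is \emph{not} controlled by $C\int_0^T E\,\mathrm{d}t$: across the transition layer $f(c_{\varepsilon})$ is of order one while $\operatorname{div}\boldsymbol{\xi}\approx -H_{\Gamma_t}\neq 0$, so no factor $\min\{d_{\Gamma}^2,1\}$ is available to invoke the coercivity of Proposition \ref{prop2.1}; only the weighted integral $\int\min\{d_{\Gamma}^2,1\}\frac{\rho_{\varepsilon}}{\varepsilon}f\,\mathrm{d}x$ is small, not $\int\frac{f}{\varepsilon}\,\mathrm{d}x$ itself. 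This leftover term is exactly the price of omitting the curvature from the kinematic condition (\ref{11.8.2}) (in the setting where $V_{\Gamma_t}-\mathbf{n}_{\Gamma_t}\cdot\mathbf{v}=H_{\Gamma_t}$ it would be cancelled by the corresponding $H_{\Gamma_t}$ contribution), and the paper's treatment of it in (\ref{3.5.2}) --- rewriting the integral in tubular coordinates, applying the Fubini-type inequality (\ref{9.14.1}) together with a Young inequality with exponents $4/3$ and $4$, and then Cauchy--Schwarz to return to the weighted energy --- is precisely what produces $C\varepsilon^{1/3}+C\int_0^T E\,\mathrm{d}t$ and fixes the exponent $1/3$ in both the hypothesis (\ref{9.15.2}) and the conclusion (\ref{2.21.7}). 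Your proposal attributes the $\varepsilon^{1/3}$ to unspecified ``lower-order geometric contributions''; without an argument of this type the Gronwall step cannot be closed at the stated rate. Everything else in your outline matches the paper's proof.
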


\noindent
As a direct consequence of Theorem \ref{thm1.1}, we obtain:
\begin{cor} \label{cor1.2}
 Assume that the initial data satisfy  (\ref{9.15.2}). Then there exist universal constants ${R}$ and $\varepsilon_0 > 0$, where ${R}$ is independent of $\varepsilon$, such that for every $T \in (0,T_0)$ and $\varepsilon \in(0,\varepsilon_0)$,
\begin{align}
\underset{0 \le t \le T}{sup} \|\psi_{\varepsilon}(x, t)-c_0\chi(x, t)\|_{L^1(\Omega)} \le R\,\varepsilon^\frac{1}{6}.
\end{align}
\end{cor}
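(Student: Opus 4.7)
The plan is to derive the corollary directly from the weighted bound $E_{\mathrm{vol}}[c_\varepsilon|\chi](t)\le C\varepsilon^{1/3}$ given by Theorem \ref{thm1.1}, by splitting $\Omega$ into a thin tubular neighborhood of the interface $\Gamma_t$ (where the weight $\vartheta(d_\Gamma)$ degenerates but $\psi_\varepsilon-c_0\chi$ is only bounded) and its complement (where $\vartheta$ is bounded away from zero). For a parameter $\delta\in(0,1)$ to be chosen later, define
\begin{equation*}
A_\delta(t)\triangleq\{x\in\Omega:|d_{\Gamma_t}(x)|\le\delta\}.
\end{equation*}

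On $A_\delta(t)$ I would use the uniform bound $|\psi_\varepsilon-c_0\chi|\le M$, which holds because $c_\varepsilon\in[-1,1]$, $\rho_\varepsilon$ is bounded by $\|\rho_{\varepsilon,0}\|_\infty$ through the transport equation \eqref{2.17.1}, and $\chi\in\{0,1\}$; hence $\psi_\varepsilon$ is controlled by $\sqrt{\|\rho_{\varepsilon,0}\|_\infty}\,c_0$, and $c_0\chi$ by $c_0$. Since $\Gamma_t$ is a smooth hypersurface, the coarea formula gives $|A_\delta(t)|\le C\delta$ uniformly in $t\in(0,T)$, yielding
\begin{equation*}
\int_{A_\delta(t)}|\psi_\varepsilon-c_0\chi|\,\mathrm{d}x\le CM\delta.
\end{equation*}
On $\Omega\setminus A_\delta(t)$ I would use the properties of the truncated signed distance $\vartheta$ described in Section 2: $\vartheta$ is monotone near $0$ with $|\vartheta(s)|\ge c|s|$ for $|s|\le\delta_0$, and $|\vartheta(s)|\ge c\delta_0$ for $|s|\ge\delta_0$. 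Consequently, for $\delta\le\delta_0$ one has $|\vartheta(d_{\Gamma_t})|\ge c\delta$ on $\Omega\setminus A_\delta(t)$, so that
\begin{equation*}
\int_{\Omega\setminus A_\delta(t)}|\psi_\varepsilon-c_0\chi|\,\mathrm{d}x\le\frac{1}{c\delta}\int_{\Omega}|\psi_\varepsilon-c_0\chi||\vartheta(d_{\Gamma_t})|\,\mathrm{d}x=\frac{E_{\mathrm{vol}}[c_\varepsilon|\chi](t)}{c\delta}\le\frac{C\varepsilon^{1/3}}{c\delta},
\end{equation*}
where in the last step I invoke \eqref{2.21.7} from Theorem \ref{thm1.1}.

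Combining the two pieces gives
\begin{equation*}
\|\psi_\varepsilon-c_0\chi\|_{L^1(\Omega)}\le C\bigl(\delta+\varepsilon^{1/3}\delta^{-1}\bigr),
\end{equation*}
and optimizing in $\delta$ by taking $\delta=\varepsilon^{1/6}$ (which satisfies $\delta\le\delta_0$ for $\varepsilon$ small) produces the claimed bound $R\,\varepsilon^{1/6}$. The bound is first obtained for a.e.\ $t\in(0,T)$ as in \eqref{2.21.7}, and the supremum in $t$ follows from continuity in time of $\psi_\varepsilon$ and $\chi$ (which in turn comes from the regularity assumed for solutions to \eqref{9.15.1} and the strong solution framework for \eqref{3.3.1}).

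The argument is essentially a routine degenerate-weight-to-unweighted $L^1$ upgrade, so the only real points that must be verified carefully are the lower bound $|\vartheta(d_{\Gamma_t})|\ge c\delta$ off the tube (from the explicit construction of $\vartheta$ in Section 2) and the tubular volume estimate $|A_\delta(t)|\le C\delta$, both of which are standard consequences of the smoothness of $\Gamma_t$ uniformly on $[0,T]$. The exponent $1/6$ is exactly half of $1/3$, a well-known loss incurred whenever one converts a distance-weighted $L^1$ estimate into an unweighted one via such a balancing argument.
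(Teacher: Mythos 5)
Your proposal is correct, and it reaches the same exponent $\varepsilon^{1/6}$ from the same two ingredients (the uniform bound on $\psi_\varepsilon-c_0\chi$ and the weighted estimate $E_{\mathrm{vol}}\le C\varepsilon^{1/3}$ from Theorem \ref{thm1.1} together with \eqref{9.23.4}), but the mechanics differ from the paper's. You split $\Omega$ along an $\varepsilon$-dependent tube $A_\delta$ of width $\delta=\varepsilon^{1/6}$, bound the inside trivially by $\|\psi_\varepsilon-c_0\chi\|_{L^\infty}|A_\delta|\le CM\delta$, bound the outside by $E_{\mathrm{vol}}/(c\delta)$ using the coercivity $|\vartheta|\ge c\min\{d_\Gamma,1\}$ of \eqref{9.17.5}, and optimize. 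The paper instead works with the \emph{fixed} tube $\Gamma_t(\delta/2)$ (the complement being handled directly by \eqref{9.23.4}, where it even gets $C\varepsilon^{1/3}$), and inside the tube applies the fiber-wise Fubini inequality \eqref{9.14.1}, i.e.\ $\bigl(\int_0^{2\delta}|f|\,\mathrm{d}r\bigr)^2\le 2\|f\|_{L^\infty}\int_0^{2\delta}|f|\,r\,\mathrm{d}r$ along each normal line, which yields $\bigl(\int_{\Gamma_t(\delta/2)}|\psi_\varepsilon-c_0\chi|\bigr)^2\le C\int|\psi_\varepsilon-c_0\chi|\,d_\Gamma\le C\varepsilon^{1/3}$ in one stroke, with no optimization parameter. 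The two arguments are essentially dual forms of the same weighted-to-unweighted interpolation and incur the identical square-root loss; yours is perhaps more transparent about where the $1/6$ comes from, while the paper's avoids introducing an $\varepsilon$-dependent decomposition and reuses its inequality \eqref{9.14.1} verbatim. Your additional remark on upgrading ``a.e.\ $t$'' to the supremum via time continuity is a point the paper glosses over, and is welcome.
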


%\begin{rem}
%It should be noted here that our main results in this paper are also valued for $\Omega \subseteq \mathbb{R}^{3}$ without increasing any essential difficulty.
%\end{rem}
Before proceeding, it is necessary to recall the motivation of this paper and related results. First, when the densities of the two fluids are the same, especially $\rho=1$, then the system of equations (\ref{9.15.1}) is reduced into the incompressible Navier-Stokes /Allen-Cahn coupled system, which has been considered in \cite{Fei,Liu Y N,Sebastian Hensel,JSX} and references cited therein. Consequently, the main results and methods in this paper are also valid for the sharp interface limit problem of incompressible Navier-Stokes/Allen-Cahn coupled system. However, the variation of density indeed increases additional difficulties in analysis which will be explained later. Second, as mentioned above that when $m_{\varepsilon}=m_0{\varepsilon}^{\theta}$ the (non-)convergence result was proved in \cite{Abels H} under the condition that $\theta>1$. However, when $-\frac{1}{4} < \theta \le 1$ in (\ref{10.19.1}), we can prove the sharp interface limit between the solutions to (\ref{9.15.1}) and the solutions to (\ref{3.3.1}) in this work. Therefore, based on the results in \cite{Abels H} and this paper, it gives a classification about the sharp interface limit with respect to the parameter $\theta$ in the regime of $\theta\in (-\frac{1}{4}, \infty)$.
%It should be remarked that the compressible case is still unclear at this moment, which is left for the future study.

Next, let us explain the main difficulties in analysis and the strategy of proof.
The first main difficulty comes from the capillary term of $\operatorname{div} (\nabla c_{\varepsilon} \otimes \nabla c_{\varepsilon}-\frac{1}{2}|\nabla c_{\varepsilon}|^2I)$. It is in fact a singular and unbounded term due to appearing of a strong layer of $c_{\varepsilon}$ across the interfacial region. We adopt the strategy of combining it with $ \nabla \psi_{\varepsilon} \cdot \partial_{t} \boldsymbol{\xi}$ together. Then the corresponding singular terms can be canceled with each other. However, the time evolution of the vector field $\boldsymbol{\xi}$ should be estimated in a suitable way. One refers to Section 3  for details. In addition, as mentioned above, the variation in density also causes some analytical difficulties due to the fact that the densities of the two fluids often differ significantly.

Finally, it is noted that the convergence results for the incompressible case were recently established in \cite{Sebastian Hensel} by the relative energy method. However, for some technical reasons, the evolution equation of the free boundary (\ref{11.8.2}) is replaced by $V_{\Gamma_{t}}-\mathbf{n}_{\Gamma_{t}} \cdot \mathbf{v}|_{\Gamma_{t}} =H_{\Gamma_{t}} $ in \cite{Sebastian Hensel}, also see \cite{Fei, Liu Y N}. In general, it seems that the velocity of the free boundary should only be related to the velocity of the fluid,
and it is less related to the curvature of the curve describing the free boundary. Consequently, this is the main motivation why the curvature is not included in (\ref{11.8.2}) in this paper. Thus, to close the estimates and avoid the appearance of $H_{\Gamma_{t}}$, it is necessary to deal with the term of $(\operatorname{div} \boldsymbol{\xi} \,\sqrt{\frac{2 f}{\rho_\varepsilon }})^2$ in (\ref{3.5.2}) carefully. It is also emphasized that the case $\theta\in[-1, -1/4]$ is interesting and will be left for future study.

The paper is organized as follows. Some symbols and facts which will be used frequently are stated in Section 2. Section 3 is devoted to deriving the corresponding estimates of relative energy and phase indicators. Finally, in Section 4, we complete the proofs of Theorem \ref{thm1.1} and Corollary \ref{cor1.2} based on the estimates achieved in Section 3.

\section{Preliminaries}
We start with the definition of strong solutions to the sharp interface limit model (\ref{3.3.1}).
\begin{defn} \label{def2.1}
$(\rho,\mathbf{v},\chi)$ is called a strong solution to the free boundary problem for the system of equations (\ref{3.3.1}),  if for all $T\in(0,T_0)$ the triple pair $(\rho,\mathbf{v},\chi)$ satisfies the following  requirements:\\
i) It holds
$$
\begin{cases}
(\rho,\mathbf{v}) \in H^1(0,T;L^2(\Omega)) \cap L^2(0,T;H^1(\Omega)),\\
\mathbf{v} \in W^{1, \infty}\left([0, T] ; W^{1, \infty}(\Omega)\right) \cap C_t^1 C_x^0(\bar{\Omega} \times[0, T] \backslash \Gamma_t) \cap C_t^0 C_x^2(\bar{\Omega} \times[0, T] \backslash \Gamma_t).
\end{cases}
$$
Furthermore, there exists a constant C depending on T, such that
\begin{align*}
(\rho,\mathbf{v}) \in L^\infty(0,T;C^1(\Omega))
\end{align*}
and the derivatives of $(\rho,\mathbf{v})$ satisfy
\begin{align*}
\| \nabla \rho \|_{L^\infty(\Omega \times (0,T))}\le C, \quad \| \nabla \mathbf{v} \|_{L^\infty(\Omega \times (0,T))} \le C.
\end{align*}
ii) The velocity field $\mathbf{v}$ satisfies both $\nabla \cdot \mathbf{v}(\cdot, t)=0$ and the equation of momentum balance in the sense of distribution. That is,
\begin{align*}
& \int_{\Omega} \rho(\chi(\cdot, T)) \mathbf{v}(\cdot, T) \cdot \eta(\cdot, T)  \,\mathrm{d} x-\int_{\Omega} \rho(\chi_0) \mathbf{v}_0 \cdot \eta(\cdot, 0)  \,\mathrm{d} x \\
& =\int_0^T \int_{\Omega} \rho(\chi) \mathbf{v} \cdot \partial_t \eta  \,\mathrm{d} x  \,\mathrm{d} t +\int_0^T \int_{\Omega} \rho(\chi) \mathbf{v} \otimes \mathbf{v} : \nabla \eta \,\mathrm{d} x  \,\mathrm{d} t \\
& \quad-\int_0^T \int_{\Omega} \nabla \mathbf{v}: \nabla \eta  \,\mathrm{d} x  \,\mathrm{d} t+c_0 \int_0^T \int_{\Gamma_t} H_{\Gamma_{t}} \mathbf{n}_{\Gamma_{t}} \cdot \eta \,\mathrm{d} \mathcal{H}^{d-1}\,\mathrm{d} t
\end{align*}
holds true for almost every $T \in(0, T_0)$ and all $\eta \in C_c^{\infty}(\Omega \times[0, T])$ satisfying $\nabla \cdot \eta=0$.

For this definition, one can also refer to \cite[Definition 10]{Hensel 1} and \cite[Definition 4]{Sebastian Hensel} for more details.
\end{defn}

Let us define the signed distance function
$$
d_{\Gamma}(x, t)\triangleq \operatorname{sdist}(\Gamma_{t}, x)=
\begin{cases}
\operatorname{dist}\left(\Omega^{-}(t), x\right) & \text { if } x \notin \Omega^{-}(t), \\
-\operatorname{dist}\left(\Omega^{+}(t), x\right) & \text { if } x \in \Omega^{-}(t).\end{cases}
$$

Choose a suitably small positive constant $\delta$ such that $\operatorname{dist}(\partial \Omega, \Gamma_{t})> 3 \delta$. For $t\in (0, T_0)$, we introduce the tubular neighborhood of $\Gamma_t$ as follows:
\begin{align*}
\Gamma_t(\delta)\triangleq \{y\in \Omega: \hbox{dist}(y, \Gamma_t)<\delta\},\quad \Gamma(\delta)=\bigcup_{t\in (0, T_0)}\Gamma_t(\delta)\times\{t\}.
\end{align*}

For every point $x \in \Gamma_{t}$, there exists a local diffeomorphisms $X_{0}: \mathbb{T}^{1} \times (0, T_{0}) \rightarrow \Gamma_{t}$. For simplicity of notation, set
$$
\mathbf{n}_{\Gamma_{t}}(x, t)\triangleq \mathbf{n}(s, t), \quad \text { for all } x=X_{0}(s, t) \in \Gamma_{t}.
$$
Then
$$
d_{\Gamma}\left(X_{0}(s, t)+r \mathbf{n}(s, t),\ t\right)=r,
$$
which implies that for all $(x,t)\in \Gamma(3 \delta)$ and $(p,t)\in \Gamma$,
\begin{eqnarray} \label{9.19.1}
\nabla d_{\Gamma}(x, t)=\mathbf{n}_{\Gamma_{t}}\left(P_{\Gamma_{t}}(x), t\right),\quad \partial_{t} d_{\Gamma}(x, t)=-V_{\Gamma_{t}}\left(P_{\Gamma_{t}}(x), t\right),\quad \Delta d_{\Gamma}(p, t)=-H_{\Gamma_{t}}(p, t),
\end{eqnarray}
where $P_{\Gamma_{t}}(x)$ is the orthogonal projection (cf. \cite[Section 4.1]{Chen X}).

By the way, we introduce the extended  mean curvature vector $\mathbf{H}(x,t)$ of $\Gamma_{t}$ to the whole domain $\Omega$.
\begin{defn}
The extended mean curvature vector $\mathbf{H}(x,t)$ is defined by
\begin{align} \label{2.16.1}
\mathbf{H}(x,t) \triangleq  H_{\Gamma_{t}}(P_{\Gamma_{t}}(x),t)\mathbf{n}_{\Gamma_{t}}(x,t)\zeta(x,t),
\end{align}
where $x=P_{\Gamma_{t}}(x)+d_{\Gamma}(x,t) \mathbf{n}_{\Gamma_{t}}(x,t)$ and $\zeta$ is a cut-off function satisfying
\begin{align*}
\zeta(\cdot,t) \in C^\infty_c(\Gamma_t(2\delta))\quad  \text{ and } \quad  \zeta(\cdot,t)=1 \quad \text{ in } \Gamma_t(\delta).
\end{align*}
\end{defn}

We also define the constant extension of $\mathbf{v}$ away from $\Gamma_{t}$ by
\begin{align} \label{2.23.2}
\mathbf{\tilde{v}}(x,t) \triangleq \mathbf{v}(P_{\Gamma_{t}}(x),t)  \quad \text{ in }\ \Gamma_{t}(2\delta).
\end{align}
Using the regularity of $\mathbf{v}$, we have
\begin{align} \label{2.23.3}
|\mathbf{\tilde{v}}(x,t) -\mathbf{v}(x,t)| \le \omega(t) |d_{\Gamma}(x, t)|, \quad |\nabla \mathbf{\tilde{v}}(x,t) -\nabla \mathbf{v}(x,t)| \le \tilde{\omega}(t) |d_{\Gamma}(x, t)|
\end{align}
for some non-negative bounded functions $\omega(t)$ and $\tilde{\omega}(t)$.

The main results in this paper rely on the choice of the following two functions $\boldsymbol{\xi}$ and $\vartheta$. Firstly, following the general strategy of \cite{Sebastian Hensel}, we extend inner normal vector as
\begin{align} \label{2.27.1}
\boldsymbol{\xi}(x, t)=\phi\left(\frac{\mathrm{d}_{\Gamma}(x, t)}{\delta}\right) \nabla \mathrm{d}_{\Gamma}(x, t),
\end{align}
where $\phi(x) \geq 0$ is an even, smooth function on $\mathbb{R}$, which satisfies monotonically decreasing on $x \in[0,1]$ and
$$
\begin{cases}\phi(x)>0\quad \text { for } & |x|<1, \\ \phi(x)=0\quad \text { for } & |x| \geq 1, \\ 1-4 x^2 \leq \phi(x) \leq 1-\frac{1}{2} x^2 & \text { for }|x| \leq 1 / 2.\end{cases}
$$
These ensure that $\phi'(x)\sim O(x)$ in the interval $[-\frac{1}{2}, \frac{1}{2}]$. Based on these definitions, some properties of $\boldsymbol{\xi}$ are summarized as follows. For every $T\in(0,T_0)$,\\
(A1) Regularity estimates
\begin{align}\label{9.17.3}
&\boldsymbol{\xi} \in C^{0,1}\left([0, T] ; L^{\infty}(\Omega)\right) \cap L^{\infty} \left([0, T] ; C_c^{1,1}(\Omega)\right).
\end{align}
Besides, it satisfies that
\begin{align} \label{2.28.1}
\left\|(\partial_t \boldsymbol{\xi}, \nabla^2 \boldsymbol{\xi})\right\|_{L^{\infty}(\Omega \times(0, T))} \leq C .
\end{align}
(A2) Coercivity and consistency:
\begin{align}
|\boldsymbol{\xi}| & \leq 1-c \min \left\{d_{\Gamma}^2, 1\right\} & & \text { a.e. on } \Omega \times[0, T], \notag\\
\boldsymbol{\xi} & = \mathbf{n}_{\Gamma_{t}} \text { and } \nabla \cdot \boldsymbol{\xi}=- H_{\Gamma_{t}} & & \text { on } \Gamma_{t} .\label{2.18.2}
\end{align}
(A3) It also holds
\begin{align}
&|\mathbf{H} \cdot \boldsymbol{\xi}+\nabla \cdot \boldsymbol{\xi}|  \leq C \min \{d_{\Gamma}, 1\} && \text { a.e. on } \Omega \times[0, T], \label{2.20.1}\\
&(\boldsymbol{\xi} \cdot \nabla)\mathbf{H}=0, \quad (\boldsymbol{\xi} \cdot \nabla)\mathbf{\tilde{v}}=0 \quad &&\text{ in } \Omega \label{2.23.1}.
\end{align}
(A4) Moreover, there exists a constant $C$ such that
\begin{align}
&\left|\partial_t \boldsymbol{\xi}+(\mathbf{v} \cdot \nabla) \boldsymbol{\xi}+(\nabla \mathbf{v})^{\top} \boldsymbol{\xi} \right| \leq C \min \{d_{\Gamma}, 1\} & \text { a.e. on } \Omega \times[0, T], \label{2.16.5}\\
&\left|\boldsymbol{\xi} \cdot\left(\partial_t+\mathbf{v} \cdot \nabla\right) \boldsymbol{\xi}\right| \leq C \min \left\{d_{\Gamma}^2, 1\right\} & \text { a.e. on } \Omega \times[0, T]  \label{2.16.7}.
\end{align}

The proofs of (A1) and (A2) are the direct consequences of the definition (\ref{2.27.1}). Next, we only give a brief proof of (A3) and (A4).

For (A3), we obtain from (\ref{9.19.1}) that
\begin{align*}
\operatorname{div} \boldsymbol{\xi} & =\left|\nabla \mathrm{d}_{\Gamma}\right|^2 \phi'\left(\frac{\mathrm{d}_{\Gamma}(x, t)}{\delta}\right)+\phi\left(\frac{\mathrm{d}_{\Gamma}(x, t)}{\delta}\right) \Delta d_{\Gamma} =O (d_{\Gamma})-\mathbf{H} \cdot \boldsymbol{\xi}.
\end{align*}
According to (\ref{2.16.1}), we know that $\mathbf{H}(x,t)$  is a constant extension of $H_{\Gamma_{t}}(P_{\Gamma_{t}}(x),t)$
in $\Gamma_t(\delta)$. Moreover, $\boldsymbol{\xi} \equiv 0$ outside $\Gamma_t(\delta)$ and we conclude that $(\boldsymbol{\xi} \cdot \nabla)\mathbf{H}=0$ in $\Omega $.

Similarly, $(\boldsymbol{\xi} \cdot \nabla)\mathbf{\tilde{v}}=0 $ holds straightly by (\ref{2.23.2}).

It suffices to verify (A4) in the region $\Gamma_t(\delta)$ because $\boldsymbol{\xi}$ and its derivatives vanish outside this region.

Recalling (\ref{11.8.2}) and (\ref{9.19.1}), we deduce that
\begin{align} \label{2.16.2}
\partial_t \mathrm{d}_{\Gamma}+\mathbf{\tilde{v}} \cdot \nabla \mathrm{d}_{\Gamma}=0 \quad \text { in } \Gamma_t(\delta) .
\end{align}
As a consequence,
\begin{align} \label{2.16.3}
\partial_t \nabla \mathrm{d}_{\Gamma}+(\mathbf{\tilde{v}} \cdot \nabla) \nabla \mathrm{d}_{\Gamma}+(\nabla \mathbf{\tilde{v}})^{\top} \nabla \mathrm{d}_{\Gamma}=0 \quad \text { in } {\Gamma}_t(\delta)
\end{align}
and
\begin{align} \label{2.16.4}
\partial_t \phi\left(\frac{\mathrm{d}_{\Gamma}(x, t)}{\delta}\right)+\mathbf{\tilde{v}} \cdot \nabla \phi\left(\frac{\mathrm{d}_{\Gamma}(x, t)}{\delta}\right)=0 .
\end{align}
Combining (\ref{2.23.3}), (\ref{2.16.3}) and (\ref{2.16.4}) together yields (\ref{2.16.5}).

Employing (\ref{2.27.1}) and testing (\ref{2.16.5}) by $\boldsymbol{\xi}$ arrive at (\ref{2.16.7}).

Below, we give some comments on the rationality of the definition of relative energy. For this purpose, it is convenient to rewrite
\begin{align}
\mathbf{n}_\varepsilon=\frac{\nabla c_\varepsilon}{|\nabla c_\varepsilon|}=\frac{\nabla \psi_\varepsilon}{|\nabla \psi_\varepsilon|}.
\end{align}
Then it is natural to rewrite (\ref{8.23.6}) as the following form:
\begin{align*}
&E \left[\rho_{\varepsilon},\mathbf{v}_{\varepsilon}, c_{\varepsilon} \mid \rho, \mathbf{v}, \chi\right]\\
=& \frac{1}{2}\int_{\Omega} (\rho_{\varepsilon}-\rho)^2+\rho_{\varepsilon}\left|\mathbf{v}_{\varepsilon}-\mathbf{v} \right|^{2}\,\mathrm{d} x+\int_{\Omega} \frac{\varepsilon}{2}\left|\nabla c_{\varepsilon}\right|^2+\frac{\rho_{\varepsilon}}{\varepsilon} f(c_{\varepsilon})-\left|\nabla \psi_{\varepsilon}\right| \mathrm{d} x+\int_{\Omega} (1-\boldsymbol{\xi} \cdot \mathbf{n}_{\varepsilon})\left|\nabla \psi_{\varepsilon}\right| \mathrm{d} x\\
=& \frac{1}{2}\int_{\Omega} (\rho_{\varepsilon}-\rho)^2+ \rho_{\varepsilon}\left|\mathbf{v}_{\varepsilon}-\mathbf{v} \right|^{2}\,\mathrm{d} x+\int_{\Omega} \frac{1}{2}\left(\sqrt{\varepsilon}| \nabla c_{\varepsilon}|-\frac{\sqrt{2 \rho_{\varepsilon} f (c_{\varepsilon})}}{\sqrt{\varepsilon}}\right)^2 \mathrm{~d} x+\int_{\Omega} (1-\boldsymbol{\xi} \cdot \mathbf{n}_{\varepsilon})\left|\nabla \psi_{\varepsilon}\right| \mathrm{d} x\\
\ge& \frac{1}{2}\int_{\Omega} (\rho_{\varepsilon}-\rho)^2+ \rho_{\varepsilon}\left|\mathbf{v}_{\varepsilon}-\mathbf{v} \right|^{2}\,\mathrm{d} x+\int_{\Omega} \frac{1}{2}\left(\sqrt{\varepsilon}| \nabla c_{\varepsilon}|-\frac{\sqrt{2 \rho_{\varepsilon} f (c_{\varepsilon})}}{\sqrt{\varepsilon}}\right)^2 \mathrm{~d} x+\int_{\Omega}\frac{1}{2}|\mathbf{n}_{\varepsilon}-\boldsymbol{\xi}|^2\left|\nabla \psi_{\varepsilon}\right| \mathrm{d} x,
\end{align*}
where the interfacial contribution $(1-\boldsymbol{\xi} \cdot \mathbf{n}_{\varepsilon})\left|\nabla \psi_{\varepsilon}\right|$ controls the interface error.

Furthermore, we introduce some coercivity properties of the relative energy which will be used frequently in this paper.
\begin{prop} \label{prop2.1}
For every $T \in [0, T_0)$, there exists a constant $C=C(\chi, v, T)$, such that for all $t \in [0, T]$, it holds that
\begin{gather}
 \int_{\Omega} (\rho_{\varepsilon}-\rho)^2+ \rho_{\varepsilon}\left|\mathbf{v}_{\varepsilon}-\mathbf{v} \right|^{2}\,\mathrm{d} x+\int_{\Omega}\left(\sqrt{\varepsilon}\left|\nabla c_{\varepsilon}\right|-\frac{\sqrt{2 \rho_{\varepsilon} f(c_{\varepsilon})}}{\sqrt{\varepsilon}} \right)^2 \mathrm{~d} x \leq C E \left[\rho_{\varepsilon},\mathbf{v}_{\varepsilon}, c_{\varepsilon} \mid \rho, \mathbf{v}, \chi\right], \\
\int_{\Omega} \min \{d_{\Gamma}^2, 1 \}\left(\frac{\varepsilon}{2}\left|\nabla c_{\varepsilon}\right|^2+\frac{\rho_{\varepsilon}}{\varepsilon} f(c_{\varepsilon})\right) \mathrm{d} x \leq C E \left[\rho_{\varepsilon},\mathbf{v}_{\varepsilon}, c_{\varepsilon} \mid \rho, \mathbf{v}, \chi\right],\\
\int_{\Omega} (1-\mathbf{n}_{\varepsilon} \cdot \boldsymbol{\xi})\left|\nabla \psi_{\varepsilon}\right|+|\mathbf{n}_{\varepsilon}-\boldsymbol{\xi}|^2\left|\nabla \psi_{\varepsilon}\right| \mathrm{d} x +\int_{\Omega} \min \{d_{\Gamma}^2, 1\}\left|\nabla \psi_{\varepsilon}\right|  \mathrm{d} x  \leq C E \left[\rho_{\varepsilon},\mathbf{v}_{\varepsilon}, c_{\varepsilon} \mid \rho, \mathbf{v}, \chi\right], \\
\int_{\Omega} |\mathbf{n}_{\varepsilon}-\boldsymbol{\xi}|^2 \varepsilon\left|\nabla c_{\varepsilon}\right|^2  \mathrm{d} x +\int_{\Omega} \min \left\{d_{\Gamma}^2, 1\right\} \varepsilon\left|\nabla c_{\varepsilon}\right|^2  \mathrm{d} x  \leq C E \left[\rho_{\varepsilon},\mathbf{v}_{\varepsilon}, c_{\varepsilon} \mid \rho, \mathbf{v}, \chi\right]
\end{gather}
and
\begin{align}
\int_{\Omega}\left(\min \{d_{\Gamma}, 1\}+\sqrt{1-\mathbf{n}_{\varepsilon} \cdot \boldsymbol{\xi}}\right) \left| \varepsilon | \nabla \varphi_{\varepsilon}|^2-|\nabla \psi_{\varepsilon}| \right| \mathrm{d} x \leq C  E \left[\rho_{\varepsilon},\mathbf{v}_{\varepsilon}, c_{\varepsilon} \mid \rho, \mathbf{v}, \chi\right].
\end{align}
\end{prop}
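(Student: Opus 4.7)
My plan is to deduce all five estimates as a short cascade from the algebraic rewriting of $E$ displayed just before the proposition, together with the coercivity $|\boldsymbol{\xi}|\le 1-c\min\{d_\Gamma^2,1\}$ from (A2). That rewriting presents $E$ as the sum of three non-negative pieces (the density/kinetic term, the equipartition defect $\frac12(\sqrt\varepsilon|\nabla c_\varepsilon|-\sqrt{2\rho_\varepsilon f(c_\varepsilon)/\varepsilon})^2$ integrated, and the interfacial error $(1-\boldsymbol{\xi}\cdot\mathbf{n}_\varepsilon)|\nabla\psi_\varepsilon|$ integrated), so the first estimate is immediate. For the second, I use the twin identity
\[
\frac{\varepsilon}{2}|\nabla c_\varepsilon|^2+\frac{\rho_\varepsilon}{\varepsilon}f(c_\varepsilon)=\frac12\bigl(\sqrt\varepsilon|\nabla c_\varepsilon|-\sqrt{2\rho_\varepsilon f(c_\varepsilon)/\varepsilon}\bigr)^2+|\nabla\psi_\varepsilon|,
\]
noting that the defect weighted by the bounded factor $\min\{d_\Gamma^2,1\}\le 1$ already sits inside $E$ by the first estimate, and that by (A2)
$c\min\{d_\Gamma^2,1\}|\nabla\psi_\varepsilon|\le(1-|\boldsymbol{\xi}|)|\nabla\psi_\varepsilon|\le(1-\boldsymbol{\xi}\cdot\mathbf{n}_\varepsilon)|\nabla\psi_\varepsilon|$, whose integral is precisely the interfacial part of $E$.

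The third estimate runs along the same lines: its first integrand is literally the interfacial error, the second one follows from $|\mathbf{n}_\varepsilon-\boldsymbol{\xi}|^2=2(1-\boldsymbol{\xi}\cdot\mathbf{n}_\varepsilon)-(1-|\boldsymbol{\xi}|^2)\le 2(1-\boldsymbol{\xi}\cdot\mathbf{n}_\varepsilon)$, and the third is exactly the bound just obtained. For the fourth, the further identity
\[
\varepsilon|\nabla c_\varepsilon|^2=\bigl(\sqrt\varepsilon|\nabla c_\varepsilon|-\sqrt{2\rho_\varepsilon f(c_\varepsilon)/\varepsilon}\bigr)^2+2|\nabla\psi_\varepsilon|-\frac{2\rho_\varepsilon f(c_\varepsilon)}{\varepsilon},
\]
multiplied by the non-negative uniformly bounded weights $|\mathbf{n}_\varepsilon-\boldsymbol{\xi}|^2\le 4$ or $\min\{d_\Gamma^2,1\}\le 1$, lets me drop the non-positive last term, bound the weighted squared defect by the first estimate, and control the weighted $|\nabla\psi_\varepsilon|$ terms by the third estimate.

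For the fifth, I apply Cauchy--Schwarz to the factorisation $\bigl|\varepsilon|\nabla c_\varepsilon|^2-|\nabla\psi_\varepsilon|\bigr|=\sqrt\varepsilon|\nabla c_\varepsilon|\cdot\bigl|\sqrt\varepsilon|\nabla c_\varepsilon|-\sqrt{2\rho_\varepsilon f(c_\varepsilon)/\varepsilon}\bigr|$ with weight $w=\min\{d_\Gamma,1\}+\sqrt{1-\boldsymbol{\xi}\cdot\mathbf{n}_\varepsilon}$: the defect factor integrates to $E^{1/2}$ by the first estimate, and $\int w^2\varepsilon|\nabla c_\varepsilon|^2\,dx\le CE$ follows from $w^2\le 2\min\{d_\Gamma^2,1\}+2(1-\boldsymbol{\xi}\cdot\mathbf{n}_\varepsilon)$, where the first half is the fourth estimate and the second half is reduced to the fourth via $1-\boldsymbol{\xi}\cdot\mathbf{n}_\varepsilon=\frac12|\mathbf{n}_\varepsilon-\boldsymbol{\xi}|^2+\frac12(1-|\boldsymbol{\xi}|^2)$. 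The main obstacle is the auxiliary pointwise bound $1-|\boldsymbol{\xi}|^2\le C\min\{d_\Gamma^2,1\}$, which (A2) does not state directly; I plan to extract it from the quadratic lower bound $\phi(x)\ge 1-4x^2$ on $[-\tfrac12,\tfrac12]$ inside $\Gamma_t(\delta/2)$, and to absorb the trivial bound $1-|\boldsymbol{\xi}|^2\le 1$ outside $\Gamma_t(\delta/2)$ into $\min\{d_\Gamma^2,1\}\ge\min\{\delta^2/4,1\}$. Once this auxiliary fact is secured the cascade closes all five estimates.
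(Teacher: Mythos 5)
Your cascade is correct and is essentially the argument the paper intends: the paper itself omits the proof, deferring to \cite[Lemma 5]{Sebastian Hensel}, and your chain of identities starting from the displayed decomposition of $E$ into its three non-negative pieces (kinetic/density part, equipartition defect, interfacial error), combined with $1-\boldsymbol{\xi}\cdot\mathbf{n}_{\varepsilon}\ge 1-|\boldsymbol{\xi}|\ge c\min\{d_{\Gamma}^2,1\}$ and $|\mathbf{n}_{\varepsilon}-\boldsymbol{\xi}|^2\le 2(1-\boldsymbol{\xi}\cdot\mathbf{n}_{\varepsilon})$, is exactly that standard argument. The one point genuinely requiring care --- the upper bound $1-|\boldsymbol{\xi}|^2\le C\min\{d_{\Gamma}^2,1\}$ needed for the fifth estimate, which does not follow from the coercivity inequality in (A2) but rather from $\phi(x)\ge 1-4x^2$ on $|x|\le 1/2$ together with the trivial bound away from $\Gamma_t(\delta/2)$ --- you identify and resolve correctly.
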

\begin{proof}
Since the proof of Proposition \ref{prop2.1} is similar to \cite[Lemma 5]{Sebastian Hensel}, we therefore omit it for simplicity.
\end{proof}

Based on Proposition \ref{prop2.1}, we have the following lemma.
\begin{lem} \label{lemma 2.3}
There exists a generic constant $C<\infty$, which is independent of $\varepsilon$, such that for any $t \in[0, T]$ the following estimate holds.
$$
\int_0^t \!\int_{\Omega} \nabla \mathbf{v}: (I-\mathbf{n}_{\varepsilon} \otimes \mathbf{n}_{\varepsilon}) (\varepsilon|\nabla c_{\varepsilon}|^2- |\nabla \psi_{\varepsilon}|) \,\mathrm{d} x \, \mathrm{d}\varsigma \leq C \int_0^t \! E \left[\rho_{\varepsilon},\mathbf{v}_{\varepsilon}, c_{\varepsilon} \mid \rho, \mathbf{v}, \chi\right] \,\mathrm{d}\varsigma.
$$
\end{lem}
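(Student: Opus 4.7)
The plan is to reduce the proof to the pointwise estimate
\begin{align*}
\big|\nabla\mathbf{v}:(I - \mathbf{n}_\varepsilon\otimes\mathbf{n}_\varepsilon)\big| \leq C\big(\min\{d_\Gamma, 1\} + \sqrt{1 - \mathbf{n}_\varepsilon\cdot\boldsymbol{\xi}}\big) \quad \text{a.e. on } \Omega\times(0,T),
\end{align*}
and then, after multiplying by $|\varepsilon|\nabla c_\varepsilon|^2 - |\nabla\psi_\varepsilon||$, to invoke the last coercivity property in Proposition \ref{prop2.1} and integrate in time.

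The starting observation is that the incompressibility $\nabla\cdot\mathbf{v} = 0$ gives $\nabla\mathbf{v}:I = 0$, so
\begin{align*}
\nabla\mathbf{v}:(I - \mathbf{n}_\varepsilon\otimes\mathbf{n}_\varepsilon) = -\,\mathbf{n}_\varepsilon\cdot(\nabla\mathbf{v})\mathbf{n}_\varepsilon.
\end{align*}
The problem is thereby reduced to controlling the normal-normal component of $\nabla\mathbf{v}$ with $\mathbf{n}_\varepsilon$-weights. In the tubular neighborhood $\Gamma_t(\delta)$ I would use the constant normal extension $\mathbf{\tilde{v}}$ from (\ref{2.23.2}) and split
\begin{align*}
\mathbf{n}_\varepsilon\cdot(\nabla\mathbf{v})\mathbf{n}_\varepsilon = \mathbf{n}_\varepsilon\cdot\nabla(\mathbf{v} - \mathbf{\tilde{v}})\cdot\mathbf{n}_\varepsilon + \mathbf{n}_\varepsilon\cdot(\nabla\mathbf{\tilde{v}})\mathbf{n}_\varepsilon.
\end{align*}
The first term is dominated by $|\nabla(\mathbf{v} - \mathbf{\tilde{v}})| \leq \tilde{\omega}(t)|d_\Gamma| \leq C\min\{d_\Gamma,1\}$, by (\ref{2.23.3}). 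For the second term, the identity (\ref{2.23.1}) gives $(\nabla\mathbf{\tilde{v}})\boldsymbol{\xi} = (\boldsymbol{\xi}\cdot\nabla)\mathbf{\tilde{v}} = 0$, so that $(\nabla\mathbf{\tilde{v}})\mathbf{n}_\varepsilon = (\nabla\mathbf{\tilde{v}})(\mathbf{n}_\varepsilon - \boldsymbol{\xi})$; combined with $|\mathbf{n}_\varepsilon - \boldsymbol{\xi}|^2 \leq 2(1 - \mathbf{n}_\varepsilon\cdot\boldsymbol{\xi})$ (using $|\mathbf{n}_\varepsilon|=1$ and $|\boldsymbol{\xi}|\le 1$), this produces the required $\sqrt{1 - \mathbf{n}_\varepsilon\cdot\boldsymbol{\xi}}$ factor.

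Outside $\Gamma_t(\delta)$ one has $d_\Gamma \geq \delta$, so $\min\{d_\Gamma,1\} \geq \min\{\delta,1\}$ and the $L^\infty$ bound on $\nabla\mathbf{v}$ is absorbed into this weight, recovering the same pointwise estimate. Applying it, using the coercivity inequality from Proposition \ref{prop2.1}, and integrating in time complete the argument.

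The main obstacle will be the cancellation of the normal-normal derivative: the free boundary condition (\ref{9.17.1}) does not directly force $\mathbf{n}\cdot D\mathbf{v}\cdot\mathbf{n}$ to vanish on $\Gamma_t$, so a naive pointwise argument using only the $L^\infty$ bound on $\nabla\mathbf{v}$ cannot be matched against the available coercivity. It is the geometric property of $\mathbf{\tilde{v}}$ being constant along the normals to $\Gamma_t$, encoded in (\ref{2.23.1}), that turns the offending normal-normal derivative into the small quantity $|\mathbf{n}_\varepsilon - \boldsymbol{\xi}|$ and thus closes the estimate.
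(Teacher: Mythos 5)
Your proposal is correct and follows essentially the same route as the paper: both exploit $\nabla\mathbf{v}:I=\operatorname{div}\mathbf{v}=0$, the normal extension $\mathbf{\tilde v}$ with (\ref{2.23.3}), the identity $(\boldsymbol{\xi}\cdot\nabla)\mathbf{\tilde v}=0$ from (\ref{2.23.1}) to trade a normal direction for the small quantity $\mathbf{n}_\varepsilon-\boldsymbol{\xi}$, and the coercivity of Proposition \ref{prop2.1}. The only (harmless) difference is bookkeeping: you split $\nabla\mathbf{v}=\nabla(\mathbf{v}-\mathbf{\tilde v})+\nabla\mathbf{\tilde v}$ first and land on a single pointwise bound matched to the last inequality of Proposition \ref{prop2.1}, whereas the paper first writes $\mathbf{n}_\varepsilon=(\mathbf{n}_\varepsilon-\boldsymbol{\xi})+\boldsymbol{\xi}$ and then closes with Cauchy--Schwarz against two of the other coercivity estimates.
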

\begin{proof}
Notice that $\nabla \mathbf{v}: I=\operatorname{div} \mathbf{v} =0$, it suffices to prove that
$$
\int_0^t \!\int_{\Omega} \nabla \mathbf{v}: \mathbf{n}_{\varepsilon} \otimes \mathbf{n}_{\varepsilon} (\varepsilon|\nabla c_{\varepsilon}|^2- |\nabla \psi_{\varepsilon}|) \,\mathrm{d} x \, \mathrm{d}\varsigma \leq C \int_0^t  E \left[\rho_{\varepsilon},\mathbf{v}_{\varepsilon}, c_{\varepsilon} \mid \rho, \mathbf{v}, \chi\right]\,\mathrm{d}\varsigma.
$$
We adopt the facts that $\mathbf{n}_{\varepsilon}=\mathbf{n}_{\varepsilon}-\boldsymbol{\xi}+\boldsymbol{\xi}$ and $\nabla \mathbf{v}: \mathbf{n}_{\varepsilon} \otimes \boldsymbol{\xi}=(\boldsymbol{\xi} \cdot \nabla) \mathbf{v} \cdot \mathbf{n}_{\varepsilon}$ to yield that
\begin{align*}
&\int_0^t \!\int_{\Omega} \nabla \mathbf{v}: \mathbf{n}_{\varepsilon} \otimes \mathbf{n}_{\varepsilon} (\varepsilon|\nabla c_{\varepsilon}|^2- |\nabla \psi_{\varepsilon}|) \,\mathrm{d} x \, \mathrm{d}\varsigma\\
= & \int_0^t \!\int_{\Omega} \nabla\mathbf{v}:\left(\mathbf{n}_{\varepsilon} \otimes (\mathbf{n}_{\varepsilon}-\boldsymbol{\xi})\right) (\varepsilon|\nabla c_{\varepsilon}|^2- |\nabla \psi_{\varepsilon}|) \,\mathrm{d} x \, \mathrm{d}\varsigma \\
&+\int_0^t \!\int_{\Omega} (\boldsymbol{\xi} \cdot \nabla) (\mathbf{v}-\mathbf{\tilde{v}}) \cdot \mathbf{n}_{\varepsilon} (\varepsilon|\nabla c_{\varepsilon}|^2- |\nabla \psi_{\varepsilon}|) \,\mathrm{d} x \, \mathrm{d}\varsigma+\int_0^t \!\int_{\Omega} (\boldsymbol{\xi} \cdot \nabla) \mathbf{\tilde{v}} \cdot \mathbf{n}_{\varepsilon} (\varepsilon|\nabla c_{\varepsilon}|^2- |\nabla \psi_{\varepsilon}|) \,\mathrm{d} x \, \mathrm{d}\varsigma\\
\le & C\int_0^t \!\int_{\Omega} |\mathbf{n}_{\varepsilon}-\boldsymbol{\xi}| \sqrt{\varepsilon} |\nabla c_{\varepsilon}| \left|\sqrt{\varepsilon}| \nabla c_{\varepsilon}|-\varepsilon^{-\frac{1}{2}} \sqrt{2 \rho_{\varepsilon} f(c_{\varepsilon})}\right| \,\mathrm{d} x \, \mathrm{d}\varsigma \\
&+C\int_0^t \!\int_{\Omega} \min \{d_\Gamma, 1\}\left(\varepsilon|\nabla c_{\varepsilon}|^2 -|\nabla \psi_{\varepsilon}|\right) \,\mathrm{d} x \, \mathrm{d}\varsigma,
\end{align*}
where (\ref{2.23.3}) and (\ref{2.23.1}) are used in the last inequality.

Furthermore, applying the Cauchy-Schwarz inequality and Proposition \ref{prop2.1}, we finish the proof of Lemma \ref{lemma 2.3}.
\end{proof}

Consider a mapping
$$
\vartheta: \bar{\Omega} \times (0, T_{0}) \mapsto [-1,1],
$$
which stands for a time-dependent weight function with the following properties (cf. \cite[Section 3.3]{Sebastian Hensel}).

For any $T \in(0, T_0)$, we assume that there exist two positive constants $c \in(0,1)$ and $C\in(1, \infty)$, which may depend on $\chi$ and $T$ such that:\\
(B1) Regularity
\begin{align}
&\vartheta \in C^{0,1}\left([0, T] ; L^{\infty}(\bar{\Omega})\right) \cap L^{\infty} \left([0, T] ; C^{0,1}(\bar{\Omega})\right). \label{9.17.3}
\end{align}
Also, we require a further assumption on the regularity estimates of the derivatives
\begin{align} \label{9.17.4}
\left\|\left(\partial_t \vartheta, \nabla \vartheta\right)\right\|_{L^{\infty}(\Omega \times(0, T))} \leq C .
\end{align}
(B2) Coercivity and consistency:
\begin{align} \label{9.17.5}
c \min \{d_{\Gamma}, 1\} \leq |\vartheta| \leq C \min \{d_{\Gamma}, 1\} \quad \text { is fulfilled in } \Omega \times[0, T].
\end{align}
Additionally, for all $t \in (0, T)$, we suppose
\begin{align*}
\vartheta(\cdot, t)<0 & \text { a.e. in the interior of }\{\chi(\cdot, t)=1\} \cap \Omega
\end{align*}
and
\begin{align*}
\vartheta(\cdot, t)>0 & \text { a.e. in the interior of }\{\chi(\cdot, t)=0\} \cap \Omega,
\end{align*}
respectively.\\
(B3) Transportability property
\begin{align} \label{9.17.6}
\left|\partial_t \vartheta+ (\mathbf{v} \cdot \nabla) \vartheta\right| \leq C \min \{d_{\Gamma}, 1\} \quad \text { a.e. in } \Omega \times[0, T].
\end{align}

In particular, $\vartheta$ can be constructed as the smooth odd truncation of signed distance function which takes the following form:
$$
\vartheta(r)=\left\{
\begin{array}{rll}
-\delta & \text { as } & r \geq  \delta, \\
-r & \text { as } & -\delta/2\leq r \leq  \delta/2, \\
\delta & \text { as } & r \leq - \delta.
\end{array}\right.
$$
Then (B1) and (B2) are satisfied immediately. And (B3) is a straightforward consequence of (\ref{2.23.3}), (\ref{2.16.2}) and chain rule.

In view of (\ref{9.17.5}), we have
\begin{align} \label{9.23.4}
\int_\Omega \min \{d_{\Gamma}, 1\} |c_0 \chi-\psi_{\varepsilon}| \,\mathrm{d} x \leq CE_{\mathrm{vol}}\left[c_{\varepsilon} \mid \chi\right] .
\end{align}

\begin{lem} \label{lemma2.2}
For a suitably small $\eta>0$, it holds true
\begin{equation} \label{2.19.7}
\int_{\Omega} |c_0 \chi-\psi_{\varepsilon}| |\mathbf{v}_{\varepsilon}-\mathbf{v}| \,\mathrm{d} x \le \frac{C}{\eta}(E \left[\rho_{\varepsilon},\mathbf{v}_{\varepsilon}, c_{\varepsilon} \mid \rho, \mathbf{v}, \chi\right]+E_{\mathrm{vol}}\left[c_{\varepsilon} \mid \chi\right])+\eta \int_{\Omega}  |\nabla \mathbf{v}_{\varepsilon}-\nabla \mathbf{v}|^2 \,\mathrm{d} x.
\end{equation}
\end{lem}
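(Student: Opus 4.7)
The plan is to reduce the claim to an $L^2$ estimate on $c_0\chi-\psi_\varepsilon$ via standard Cauchy--Schwarz and Young's inequality with parameter $\eta$, and then use Poincar\'e to convert the velocity term into a gradient term. Concretely, applying Cauchy--Schwarz followed by Young's inequality,
\begin{equation*}
\int_\Omega |c_0\chi-\psi_\varepsilon|\,|\mathbf{v}_\varepsilon-\mathbf{v}|\,\mathrm{d} x \le \frac{1}{2\eta}\|c_0\chi-\psi_\varepsilon\|_{L^2(\Omega)}^2+\frac{\eta}{2}\|\mathbf{v}_\varepsilon-\mathbf{v}\|_{L^2(\Omega)}^2.
\end{equation*}
Since both $\mathbf{v}_\varepsilon$ and $\mathbf{v}$ vanish on $\partial\Omega$ by the no-slip boundary conditions, Poincar\'e's inequality yields $\|\mathbf{v}_\varepsilon-\mathbf{v}\|_{L^2(\Omega)}^2\le C\|\nabla\mathbf{v}_\varepsilon-\nabla\mathbf{v}\|_{L^2(\Omega)}^2$, which, after rescaling $\eta$, produces the target gradient term on the right-hand side.

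It then remains to establish the key estimate
\begin{equation*}
\|c_0\chi-\psi_\varepsilon\|_{L^2(\Omega)}^2 \le C\bigl(E\left[\rho_{\varepsilon},\mathbf{v}_{\varepsilon}, c_{\varepsilon} \mid \rho, \mathbf{v}, \chi\right]+E_{\mathrm{vol}}\left[c_{\varepsilon} \mid \chi\right]\bigr).
\end{equation*}
I would decompose $\Omega$ into the interfacial tube $\Gamma_t(\sigma)$ and its complement, for some $\sigma>0$ to be fixed. Outside the tube, property (\ref{9.17.5}) ensures $|\vartheta|\ge c\sigma$, so combining this with the uniform $L^\infty$ bound $|c_0\chi-\psi_\varepsilon|\le C$ and the definition of $E_{\mathrm{vol}}$ gives
\begin{equation*}
\int_{\Omega\setminus\Gamma_t(\sigma)} |c_0\chi-\psi_\varepsilon|^2\,\mathrm{d} x \le \frac{C}{\sigma}\int_\Omega|c_0\chi-\psi_\varepsilon||\vartheta|\,\mathrm{d} x=\frac{C}{\sigma}E_{\mathrm{vol}}.
\end{equation*}
Inside the tube I would exploit the equipartition estimate and the interfacial-orientation estimate from Proposition~\ref{prop2.1}, together with a fundamental-theorem-of-calculus argument along the normal $\nabla d_\Gamma$, to compare $\psi_\varepsilon$ with the sharp profile $c_0\chi$ up to an error measured by $E$ inside the diffuse layer.

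The main obstacle is precisely this inside-tube control. A naive area bound only yields $C\sigma$ inside, and after balancing with the outside contribution $C E_{\mathrm{vol}}/\sigma$ one recovers only the weaker bound $\sqrt{E_{\mathrm{vol}}}$, which does not match the linear right-hand side of the lemma. Upgrading to $C(E+E_{\mathrm{vol}})$ hinges on fully using the structure of the relative energy $E$, in particular the equipartition term $(\sqrt{\varepsilon}|\nabla c_\varepsilon|-\sqrt{2\rho_\varepsilon f/\varepsilon})^2$ and the interfacial-orientation term $(1-\mathbf{n}_\varepsilon\cdot\boldsymbol{\xi})|\nabla\psi_\varepsilon|$ controlled by Proposition~\ref{prop2.1}, so as to pin the diffuse profile $\psi_\varepsilon$ to the sharp profile $c_0\chi$ within the $O(\varepsilon)$-width layer up to an $E$-controlled error. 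Once this refined $L^2$ bound is in place, feeding it into the opening Cauchy--Schwarz/Young/Poincar\'e step closes the estimate and yields the claimed inequality.
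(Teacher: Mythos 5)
The paper does not actually prove this lemma; it only cites estimate (31) of \cite{Sebastian Hensel}, so your attempt has to be measured against that argument. Your proposal has a genuine gap, which you yourself flag: everything is reduced to the inequality $\|c_0\chi-\psi_{\varepsilon}\|_{L^2(\Omega)}^2\le C\bigl(E\left[\rho_{\varepsilon},\mathbf{v}_{\varepsilon},c_{\varepsilon}\mid\rho,\mathbf{v},\chi\right]+E_{\mathrm{vol}}\left[c_{\varepsilon}\mid\chi\right]\bigr)$, and this is never established. More importantly, the reduction itself is the wrong move. Splitting $\Omega$ into a tube $\Gamma_t(\sigma)$ and its complement, with the inside bounded by $C\sigma$ and the outside by $CE_{\mathrm{vol}}/\sigma$, can only ever yield $\sqrt{E_{\mathrm{vol}}}$ after optimizing in $\sigma$, exactly as you observe; and the ingredients you hope to use from Proposition \ref{prop2.1} (equipartition and the term $(1-\mathbf{n}_{\varepsilon}\cdot\boldsymbol{\xi})|\nabla\psi_{\varepsilon}|$) control gradients and orientations of $\psi_{\varepsilon}$, not its pointwise $L^2$ deviation from $c_0\chi$. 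There is no mechanism in your sketch that converts them into the refined inside-tube bound you need; controlling the volume of a ``wrong-phase'' region by the energy would in general require an isoperimetric-type argument that is nowhere indicated.

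The actual proof (in the cited reference) is anisotropic and never passes through an $L^2$ bound on $c_0\chi-\psi_{\varepsilon}$. Writing $\mathbf{w}=\mathbf{v}_{\varepsilon}-\mathbf{v}$: outside a tube $\Gamma_t(\sigma)$ one uses $|\vartheta|\ge c\sigma$ from (\ref{9.17.5}) together with $|c_0\chi-\psi_{\varepsilon}|\le C$ and $|\mathbf{w}|\le\frac12(1+|\mathbf{w}|^2)$ to obtain $C(E_{\mathrm{vol}}+E)$ directly. Inside the tube one works in normal coordinates $(p,r)$: the Fubini interpolation (\ref{9.14.1}) is applied \emph{only in the normal variable}, giving $\int_{-\sigma}^{\sigma}|c_0\chi-\psi_{\varepsilon}|\,\mathrm{d}r\le C\bigl(\int_{-\sigma}^{\sigma}|c_0\chi-\psi_{\varepsilon}|\,|r|\,\mathrm{d}r\bigr)^{1/2}$, while $\sup_r|\mathbf{w}(p+r\mathbf{n})|$ on each normal fiber is controlled by a one-dimensional Agmon/trace inequality in terms of the fiber integrals of $|\mathbf{w}|^2$ and $|\nabla\mathbf{w}|^2$. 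A Cauchy--Schwarz over $p\in\Gamma_t$ then gives a bound of the form $CE_{\mathrm{vol}}^{1/2}\bigl(\|\mathbf{w}\|_{L^2}+\|\mathbf{w}\|_{L^2}^{1/2}\|\nabla\mathbf{w}\|_{L^2}^{1/2}\bigr)$, and Young's inequality with suitable exponents produces exactly $\frac{C}{\eta}(E+E_{\mathrm{vol}})+\eta\|\nabla\mathbf{w}\|_{L^2}^2$. It is this fiber-wise treatment of both factors that upgrades your $\sqrt{E_{\mathrm{vol}}}$ to the linear bound; your isotropic Cauchy--Schwarz/Poincar\'e opening discards precisely the structure needed for that upgrade. (Also note that Poincar\'e is unnecessary for the $\|\mathbf{w}\|_{L^2}^2$ term, since $\int_{\Omega}\rho_{\varepsilon}|\mathbf{w}|^2\,\mathrm{d}x\le CE$ already by the definition of the relative energy.)
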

\begin{proof}
The proof is referred to (31) in \cite{Sebastian Hensel}.
\end{proof}

\section{Gronwall-type Estimates}
In this section, we move to derive the differential inequality about the relative energy (\ref{8.23.6}) and the difference in the phase indicators (\ref{9.13.1}).
\subsection{Estimate for the Relative Energy}
The purpose of this subsection is to derive the estimate of the relative energy.
%The general idea of the proof is partially similar to the proof of \cite{Fischer J.}.
By the definition of $E \left[\rho_{\varepsilon},\mathbf{v}_{\varepsilon}, c_{\varepsilon} \mid \rho, \mathbf{v}, \chi\right]$, it is necessary to establish the estimates of error functions. Denote
\begin{align} \label{2.21.1}
\mathbf{w}=\mathbf{v}_\varepsilon -\mathbf{v}.
\end{align}

\begin{prop} \label{prop3.2}
Let $\mathbf{w}$ be defined as above and $E \left[\rho_{\varepsilon},\mathbf{v}_{\varepsilon}, c_{\varepsilon} \mid \rho, \mathbf{v}, \chi\right]$ be defined as in (\ref{8.23.6}). Then there exists a positive constant $C$, such that for any $T \in (0,T_0)$, the following inequality holds:
\begin{eqnarray}
\begin{split} \label{9.12.3}
& E \left[\rho_{\varepsilon},\mathbf{v}_{\varepsilon}, c_{\varepsilon} \mid \rho, \mathbf{v}, \chi\right](T)+\int_0^T \!\int_{\Omega} \frac{1}{2}(\nabla \mathbf{w})^2 + \frac{1}{2}(\mu_\varepsilon )^{2}\,\mathrm{d} x \, \mathrm{d} t+\int_0^T \!\int_{\Omega} \frac{1}{2}(\operatorname{div} \boldsymbol{\xi} \,\sqrt{\frac{2 f}{ \rho_\varepsilon }} + \mu_\varepsilon)^2 \,\mathrm{d} x \, \mathrm{d} t\\
\le& E \left[\rho_{\varepsilon},\mathbf{v}_{\varepsilon}, c_{\varepsilon} \mid \rho, \mathbf{v}, \chi\right](0)+ C \varepsilon^\frac{1}{3}+ C \int_0^T (E \left[\rho_{\varepsilon},\mathbf{v}_{\varepsilon}, c_{\varepsilon} \mid \rho, \mathbf{v}, \chi\right](t)+E_{\mathrm{vol}}\left[c_{\varepsilon} \mid \chi\right](t))\, \mathrm{d} t.
\end{split}
\end{eqnarray}
\end{prop}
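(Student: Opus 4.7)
The plan is to differentiate the relative energy $E[\rho_\varepsilon,\mathbf{v}_\varepsilon,c_\varepsilon\mid \rho,\mathbf{v},\chi]$ in time and absorb the singular and sign-indefinite contributions using the PDEs (\ref{9.15.1}) and (\ref{3.3.1}), the coercivity bounds of Proposition~\ref{prop2.1}, the transport-like properties (A1)--(A4) of the extended unit normal field $\boldsymbol{\xi}$, and Gronwall's inequality. I split $E$ into four pieces: the density defect $\tfrac12\int(\rho_\varepsilon-\rho)^2$, the kinetic defect $\tfrac12\int\rho_\varepsilon|\mathbf{w}|^2$, the Modica--Mortola energy $\int(\tfrac{\varepsilon}{2}|\nabla c_\varepsilon|^2+\tfrac{\rho_\varepsilon}{\varepsilon}f(c_\varepsilon))$, and the anisotropy term $-\int\boldsymbol{\xi}\cdot\nabla\psi_\varepsilon$, and compute the time derivative of each separately.

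For the density defect, subtracting (\ref{2.17.1}) and (\ref{2.20.11}), multiplying by $\rho_\varepsilon-\rho$, and invoking $\operatorname{div}\mathbf{v}_\varepsilon=\operatorname{div}\mathbf{v}=0$ together with $\|\nabla\rho\|_{L^\infty}\le C$ leaves only remainders of the form $C\int|\rho_\varepsilon-\rho||\mathbf{w}||\nabla\rho|$, controlled by $\eta\int|\nabla\mathbf{w}|^2+C_\eta E$. For the kinetic defect, the standard relative-energy identity (using the continuity equations in conservative form) produces the viscous dissipation $\int|\nabla\mathbf{w}|^2$, convective remainders bounded by $\|\nabla\mathbf{v}\|_{L^\infty}E$, and the singular capillary contribution
\[
-\varepsilon\int_\Omega\operatorname{div}\bigl(\nabla c_\varepsilon\otimes\nabla c_\varepsilon-\tfrac12|\nabla c_\varepsilon|^2 I\bigr)\cdot\mathbf{w}\,\mathrm{d}x,
\]
which after integration by parts becomes $\int\nabla\mathbf{v}:(\varepsilon\nabla c_\varepsilon\otimes\nabla c_\varepsilon-\tfrac{\varepsilon}{2}|\nabla c_\varepsilon|^2I)$ plus a capillary stress tested against $\mathbf{v}_\varepsilon$ that will be combined with the chemical potential below.

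The Modica--Mortola energy, differentiated via (\ref{9.11.1}) and (\ref{10.19.1}), yields the dissipation $\int\mu_\varepsilon^2$, because the material derivative of $\tfrac{\varepsilon}{2}|\nabla c_\varepsilon|^2+\tfrac{\rho_\varepsilon}{\varepsilon}f(c_\varepsilon)$ collapses to $-\int\rho_\varepsilon\mu_\varepsilon(\partial_t c_\varepsilon+\mathbf{v}_\varepsilon\cdot\nabla c_\varepsilon)=\int\mu_\varepsilon^2$. The decisive cancellation is in the anisotropy term: differentiating $-\int\boldsymbol{\xi}\cdot\nabla\psi_\varepsilon$ and using (\ref{2.16.5}) to substitute $\partial_t\boldsymbol{\xi}=-(\mathbf{v}\cdot\nabla)\boldsymbol{\xi}-(\nabla\mathbf{v})^\top\boldsymbol{\xi}+O(d_\Gamma)$ generates the quadratic piece $\int\nabla\mathbf{v}:\mathbf{n}_\varepsilon\otimes\boldsymbol{\xi}\,|\nabla\psi_\varepsilon|$, which cancels exactly the leading singular part of the capillary stress once the latter is rewritten through $\mathbf{n}_\varepsilon\otimes\mathbf{n}_\varepsilon|\nabla\psi_\varepsilon|$; the leftover mismatch $\int\nabla\mathbf{v}:(I-\mathbf{n}_\varepsilon\otimes\mathbf{n}_\varepsilon)(\varepsilon|\nabla c_\varepsilon|^2-|\nabla\psi_\varepsilon|)$ is precisely what Lemma~\ref{lemma 2.3} controls. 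Finally, integrating $-\int\boldsymbol{\xi}\cdot\nabla\partial_t\psi_\varepsilon$ by parts and pairing with $\mu_\varepsilon^2$ through the identity $\partial_t\psi_\varepsilon=\sqrt{2\rho_\varepsilon f(c_\varepsilon)}\,\partial_t c_\varepsilon+\mathrm{l.o.t.}(\partial_t\rho_\varepsilon)$ produces the completed square $\tfrac12\bigl(\operatorname{div}\boldsymbol{\xi}\sqrt{2f/\rho_\varepsilon}+\mu_\varepsilon\bigr)^2$ displayed in the inequality.

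The principal obstacle, which simultaneously dictates the $\varepsilon^{1/3}$ rate, is controlling the residuals caused by \emph{(i)} the absence of $H_{\Gamma_t}$ in the evolution law (\ref{11.8.2}), so that no curvature term is available on the sharp-interface side to pair with leftover $\nabla\mathbf{v}$-tested capillary pieces, and \emph{(ii)} the variation of $\rho_\varepsilon$ inside $\psi_\varepsilon$, which spoils the classical Modica--Mortola identity and forces the appearance of $(\rho_\varepsilon-\rho)$-corrections. Residuals carrying an explicit factor $\min\{d_\Gamma,1\}$ are absorbed by Proposition~\ref{prop2.1}; terms of the form $|c_0\chi-\psi_\varepsilon||\mathbf{v}_\varepsilon-\mathbf{v}|$ are absorbed by Lemma~\ref{lemma2.2} at the price of a small fraction of $\int|\nabla\mathbf{w}|^2$; and the remaining density-dependent boundary-layer contribution is estimated by interpolating the $L^\infty$ bound $\|c_\varepsilon\|_{L^\infty}\le 1$ against the equipartition-type bound $\int\varepsilon|\nabla c_\varepsilon|^2\le CE$, which together with the initial assumption (\ref{9.15.2}) produces the explicit $C\varepsilon^{1/3}$ loss. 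Gronwall's inequality applied to the resulting differential inequality for $E+E_{\mathrm{vol}}$ then yields (\ref{9.12.3}).
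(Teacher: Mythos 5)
Your overall architecture coincides with the paper's: the same four-way splitting of $E$, the same cancellation of the capillary stress against the $\partial_t\boldsymbol{\xi}$-contribution via (A4), the identification of the mismatch $\int\nabla\mathbf{v}:(I-\mathbf{n}_\varepsilon\otimes\mathbf{n}_\varepsilon)(\varepsilon|\nabla c_\varepsilon|^2-|\nabla\psi_\varepsilon|)$ handled by Lemma~\ref{lemma 2.3}, the completed square $\tfrac12(\operatorname{div}\boldsymbol{\xi}\sqrt{2f/\rho_\varepsilon}+\mu_\varepsilon)^2$ from pairing $-\int\boldsymbol{\xi}\cdot\partial_t\nabla\psi_\varepsilon$ with the Allen--Cahn equation, and the use of Lemma~\ref{lemma2.2} for the $|c_0\chi-\psi_\varepsilon||\mathbf{w}|$ remainders. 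However, there is one genuine gap, and it sits exactly at the crux of the proposition. Completing the square leaves the term $\tfrac12\int_0^T\!\int_\Omega(\operatorname{div}\boldsymbol{\xi})^2\,\frac{2f(c_\varepsilon)}{\rho_\varepsilon}\,\mathrm{d}x\,\mathrm{d}t$ on the right-hand side (the paper's $I_4$); since the kinematic condition (\ref{11.8.2}) carries no $H_{\Gamma_t}$, there is no curvature term available to absorb it, and it is the \emph{sole} source of the $C\varepsilon^{1/3}$ in (\ref{9.12.3}). Your proposed mechanism --- ``interpolating $\|c_\varepsilon\|_{L^\infty}\le1$ against $\int\varepsilon|\nabla c_\varepsilon|^2\le CE$, together with the initial assumption (\ref{9.15.2})'' --- does not control this term: pointwise the integrand is merely $O(1)$ on the support of $\boldsymbol{\xi}$ (so the naive bound is $O(1)$, not $o(1)$); Proposition~\ref{prop2.1} only provides the \emph{weighted} bounds $\int\min\{d_\Gamma^2,1\}(\tfrac{\varepsilon}{2}|\nabla c_\varepsilon|^2+\tfrac{\rho_\varepsilon}{\varepsilon}f)\le CE$, not an unweighted equipartition bound; the sup bound on $c_\varepsilon$ says nothing about the localization of $f(c_\varepsilon)$ near $\Gamma_t$; and the initial assumption (\ref{9.15.2}) plays no role inside this proposition at all. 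The paper's actual argument passes to tubular coordinates over $\Gamma_t$, applies on each normal slice the Fubini-type inequality $\bigl(\int_0^{2\delta}|g|\,\mathrm{d}r\bigr)^2\le 2\|g\|_{L^\infty}\int_0^{2\delta}|g|\,r\,\mathrm{d}r$ with $g=2\rho_\varepsilon f$, uses $\|2\rho_\varepsilon f\|_{L^\infty}\le C$ and the weighted coercivity $\int\tfrac{2\rho_\varepsilon f}{\varepsilon}\,d_\Gamma^2\le CE$, and finally splits $\varepsilon^{1/4}(\cdots)^{1/2}$ by Young's inequality with exponents $(4/3,4)$ into $C\varepsilon^{1/3}+C\int_0^T E\,\mathrm{d}t$. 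Without this (or an equivalent) localization argument your proof does not close.

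Two smaller points. First, your concluding sentence applies Gronwall to obtain (\ref{9.12.3}); but (\ref{9.12.3}) \emph{is} the pre-Gronwall integral inequality --- Gronwall is applied only in Section~4 after combining with the interface-error estimate of Proposition~\ref{pro3.2}, so it should not appear here. Second, when you differentiate the Modica--Mortola part you should make explicit that the $\partial_t\rho_\varepsilon$-contribution in $\tfrac{\rho_\varepsilon}{\varepsilon}f(c_\varepsilon)$ is absorbed using the continuity equation (\ref{2.17.1}) together with $\operatorname{div}\mathbf{v}_\varepsilon=0$; this is where the inhomogeneity of the density actually enters the computation, and it is the step that makes the dissipation collapse cleanly to $\int\mu_\varepsilon^2$.
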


\begin{proof}
$\bullet$ Consider the difference between equations (\ref{2.17.1}) and (\ref{2.20.11}), it follows that
\begin{align} \label{2.20.12}
\partial_t(\rho_{\varepsilon}-\rho)+\mathbf{v}\nabla(\rho_{\varepsilon}-\rho)=-\operatorname{div} (\rho_{\varepsilon} \mathbf{w}).
\end{align}
Multiplying (\ref{2.20.12}) by $(\rho_{\varepsilon}-\rho)$ and integrating the resulting equality over $\Omega\times[0, T]$ imply that
\begin{eqnarray}
\begin{split} \label{2.23.4}
&\int_{\Omega}\frac{1}{2} [\rho_{\varepsilon}(T)-\rho(T)]^2\,\mathrm{d} x -\int_{\Omega}\frac{1}{2} [\rho_{\varepsilon}(0)-\rho(0)]^2\,\mathrm{d} x\\ =&\int_0^T \!\int_{\Omega} \operatorname{div} (\rho_{\varepsilon}\mathbf{w})(\rho-\rho_{\varepsilon}) \,\mathrm{d} x \, \mathrm{d} t =\int_0^T \!\int_{\Omega} \nabla \rho_{\varepsilon} \mathbf{w} (\rho-\rho_{\varepsilon}) \,\mathrm{d} x \, \mathrm{d} t \\
=&\int_0^T \!\int_{\Omega} \nabla (\rho_{\varepsilon}-\rho) \,\mathbf{w} (\rho-\rho_{\varepsilon}) \,\mathrm{d} x \, \mathrm{d} t+\int_0^T \!\int_{\Omega} \nabla \rho \,\mathbf{w} (\rho-\rho_{\varepsilon}) \,\mathrm{d} x \, \mathrm{d} t\\
=&\int_0^T \!\int_{\Omega} \nabla \rho \,\mathbf{w} (\rho-\rho_{\varepsilon}) \,\mathrm{d} x \, \mathrm{d} t,
\end{split}
\end{eqnarray}
where the divergence-free conditions $\hbox{div}\ \mathbf{v}=0$ and $\hbox{div}\ \mathbf{w}=0$ are used above.\\
$\bullet$ Next, we multiply (\ref{2.17.1}) by $\frac{1}{2}\mathbf{v}^2$. And it follows from the integration by parts that
\begin{align} \label{8.23.4}
\int_{\Omega} (\frac{1}{2} \rho_\varepsilon \mathbf{v}^2)(T)\,\mathrm{d} x -\int_{\Omega} (\frac{1}{2} \rho_\varepsilon \mathbf{v}^2)(0)\,\mathrm{d} x =\int_0^T \!\int_{\Omega} \,\rho_\varepsilon \mathbf{v} \cdot \partial_{t}\mathbf{v}+\rho_\varepsilon \mathbf{v}_\varepsilon \otimes \mathbf{v}: \nabla \mathbf{v} \,\mathrm{d} x \, \mathrm{d} t.
\end{align}
Using (\ref{2.17.1}), one has
\begin{align*}
&\int_0^T \!\int_{\Omega} (\rho_{\varepsilon} \partial_t\mathbf{v}_\varepsilon + \rho_{\varepsilon} \mathbf{v}_\varepsilon \cdot \nabla \mathbf{v}_\varepsilon ) \mathbf{v}_\varepsilon \,\mathrm{d} x \, \mathrm{d} t \\
=&\int_{\Omega} \frac{1}{2} (\rho_\varepsilon \mathbf{v}_\varepsilon ^2)(T)\,\mathrm{d} x -\int_{\Omega} \frac{1}{2} (\rho_\varepsilon \mathbf{v}_\varepsilon ^2)(0)\,\mathrm{d} x +\int_0^T \!\int_{\Omega} - \frac{1}{2} (\partial_{t} \rho_\varepsilon) \mathbf{v}_\varepsilon ^2+\rho_{\varepsilon} \mathbf{v}_\varepsilon \otimes  \mathbf{v}_\varepsilon :\nabla \mathbf{v}_\varepsilon \,\mathrm{d} x \, \mathrm{d} t\\
=&\int_{\Omega} \frac{1}{2} (\rho_\varepsilon \mathbf{v}_\varepsilon ^2)(T)\,\mathrm{d} x -\int_{\Omega} \frac{1}{2} (\rho_\varepsilon \mathbf{v}_\varepsilon ^2)(0)\,\mathrm{d} x +\int_0^T \!\int_{\Omega}  \frac{1}{2} \operatorname{div} (\rho_{\varepsilon}\mathbf{v}_{\varepsilon}) \mathbf{v}_\varepsilon ^2+\rho_{\varepsilon} \mathbf{v}_\varepsilon \otimes  \mathbf{v}_\varepsilon :\nabla \mathbf{v}_\varepsilon \,\mathrm{d} x \, \mathrm{d} t\\
=&\int_{\Omega} \frac{1}{2} (\rho_\varepsilon \mathbf{v}_\varepsilon ^2)(T)\,\mathrm{d} x -\int_{\Omega} \frac{1}{2} (\rho_\varepsilon \mathbf{v}_\varepsilon ^2)(0)\,\mathrm{d} x +\int_0^T \!\int_{\Omega} -\rho_{\varepsilon} \mathbf{v}_\varepsilon \otimes  \mathbf{v}_\varepsilon :\nabla \mathbf{v}_\varepsilon +\rho_{\varepsilon} \mathbf{v}_\varepsilon \otimes  \mathbf{v}_\varepsilon :\nabla \mathbf{v}_\varepsilon \,\mathrm{d} x \, \mathrm{d} t \\
=&\int_{\Omega} \frac{1}{2} (\rho_\varepsilon \mathbf{v}_\varepsilon ^2)(T)\,\mathrm{d} x -\int_{\Omega} \frac{1}{2} (\rho_\varepsilon \mathbf{v}_\varepsilon ^2)(0)\,\mathrm{d} x ,
\end{align*}
then multiplying (\ref{9.6.4}) by $ \mathbf{v}_\varepsilon $ and integration by parts lead  to
\begin{eqnarray}
\begin{split}  \label{9.11.4}
&\int_{\Omega} \frac{1}{2} (\rho_\varepsilon \mathbf{v}_\varepsilon ^2)(T)\,\mathrm{d} x -\int_{\Omega} \frac{1}{2} (\rho_\varepsilon \mathbf{v}_\varepsilon ^2)(0)\,\mathrm{d} x +\int_0^T \!\int_{\Omega} (\nabla \mathbf{v}_{\varepsilon})^2 \,\mathrm{d} x \, \mathrm{d} t\\
=&-\int_0^T \!\int_{\Omega} \varepsilon \nabla \mathbf{v}_{\varepsilon}:(I-\mathbf{n}_{\varepsilon} \otimes \mathbf{n}_{\varepsilon}) |\nabla c_{\varepsilon}|^2 \,\mathrm{d} x \, \mathrm{d} t.
\end{split}
\end{eqnarray}
Similarly, multiplying (\ref{9.6.4}) by $ \mathbf{v}$, integrating it over $\Omega$, and using the integration by parts, we obtain
\begin{eqnarray}
\begin{split} \label{9.11.3}
&\int_{\Omega} (\rho_\varepsilon \mathbf{v}_\varepsilon \mathbf{v})(T)\,\mathrm{d} x -\int_{\Omega} (\rho_\varepsilon \mathbf{v}_\varepsilon \mathbf{v})(0)\,\mathrm{d} x \\
=&\int_0^T \!\int_{\Omega} \rho_\varepsilon \mathbf{v}_\varepsilon \partial_{t}\mathbf{v} +\rho_\varepsilon \mathbf{v}_\varepsilon \otimes \mathbf{v}_\varepsilon: \nabla \mathbf{v} -\nabla \mathbf{v}_{\varepsilon}:\nabla \mathbf{v}-\varepsilon \nabla \mathbf{v}:(I-\mathbf{n}_{\varepsilon} \otimes \mathbf{n}_{\varepsilon})|\nabla c_{\varepsilon}|^2 \,\mathrm{d} x \, \mathrm{d} t.
\end{split}
\end{eqnarray}
Consequently, from (\ref{8.23.4})-(\ref{9.11.3}), we arrive at
\begin{eqnarray}
\begin{split} \label{2.18.1}
&\int_{\Omega} \frac{1}{2} (\rho_\varepsilon \mathbf{w}^2)(T)\,\mathrm{d} x-\int_{\Omega} \frac{1}{2} (\rho_\varepsilon \mathbf{w}^2)(0)\,\mathrm{d} x + \int_0^T \!\int_{\Omega} \nabla \mathbf{v}_{\varepsilon} :\nabla \mathbf{w} \,\mathrm{d} x \, \mathrm{d} t\\
=&\int_0^T \!\int_{\Omega} -\mathbf{w} (\rho_\varepsilon \partial_{t}\mathbf{v} +\rho_\varepsilon \mathbf{v}_\varepsilon \nabla \mathbf{v} )-\varepsilon \nabla \mathbf{w} :(I-\mathbf{n}_{\varepsilon} \otimes  \mathbf{n}_{\varepsilon})|\nabla c_{\varepsilon}|^2 \,\mathrm{d} x \, \mathrm{d} t.
\end{split}
\end{eqnarray}
To deal with the second term on the left-hand side in (\ref{2.18.1}), we adopt integration by parts, (\ref{9.17.1}) and (\ref{2.18.2}) to yield that
\begin{align*}
&\int_0^T \!\int_{\Omega} \nabla \mathbf{v}_{\varepsilon} :\nabla \mathbf{w} \,\mathrm{d} x \, \mathrm{d} t=\int_0^T \!\int_{\Omega}  |\nabla \mathbf{w}|^2 +\nabla \mathbf{v} :\nabla \mathbf{w} \,\mathrm{d} x \, \mathrm{d} t\\
=&\int_0^T \!\int_{\Omega}  |\nabla \mathbf{w}|^2 -\mathbf{w}(\rho \partial_t\mathbf{v}+\rho \mathbf{v} \nabla \mathbf{v}+\nabla p )\,\mathrm{d} x \, \mathrm{d} t+\int_0^T \!\int_{\Gamma_t}c_0 H_{\Gamma_{t}} \mathbf{n}_{\Gamma_{t}} \cdot \mathbf{w} \,\mathrm{d} \mathcal{H}^{d-1} \, \mathrm{d} t\\
=&\int_0^T \!\int_{\Omega}  |\nabla \mathbf{w}|^2 -\mathbf{w}(\rho \partial_t\mathbf{v}+\rho \mathbf{v} \nabla \mathbf{v})\,\mathrm{d} x \, \mathrm{d} t-\int_0^T \!\int_{\Gamma_t}c_0 (\nabla \cdot\boldsymbol{\xi} )\mathbf{n}_{\Gamma_{t}} \cdot \mathbf{w} \,\mathrm{d} \mathcal{H}^{d-1} \, \mathrm{d} t\\
=&\int_0^T \!\int_{\Omega} |\nabla \mathbf{w}|^2 -\mathbf{w}(\rho \partial_t\mathbf{v}+\rho \mathbf{v} \nabla \mathbf{v})\,\mathrm{d} x \, \mathrm{d} t+\int_0^T \!\int_{\Omega} c_0 \chi(\mathbf{w}\cdot \nabla)(\nabla \cdot\boldsymbol{\xi}) \,\mathrm{d} x \, \mathrm{d} t.
\end{align*}
Moreover, it is direct to verify that
$$
\rho_\varepsilon \mathbf{v}_\varepsilon \nabla \mathbf{v}-\rho \mathbf{v} \nabla \mathbf{v}=[(\rho_\varepsilon-\rho)\mathbf{v}+ \rho_\varepsilon \mathbf{w}]\nabla \mathbf{v}.
$$
Hence, we conclude that
\begin{small}
\begin{eqnarray}
\begin{split} \label{2.18.4}
&\int_{\Omega} \frac{1}{2} (\rho_\varepsilon \mathbf{w}^2)(T)\,\mathrm{d} x-\int_{\Omega} \frac{1}{2} (\rho_\varepsilon \mathbf{w}^2)(0)\,\mathrm{d} x + \int_0^T \!\int_{\Omega} |\nabla \mathbf{w}|^2 \,\mathrm{d} x \, \mathrm{d} t\\
=&\int_0^T \!\int_{\Omega}  -\mathbf{w}[(\rho_\varepsilon-\rho)(\partial_{t} \mathbf{v}+\mathbf{v}\nabla\mathbf{v})+\rho_\varepsilon \mathbf{w} \nabla \mathbf{v}]-\varepsilon \nabla \mathbf{w}:(I-\mathbf{n}_{\varepsilon} \otimes \mathbf{n}_{\varepsilon})|\nabla c_{\varepsilon}|^2 - c_0 \chi(\mathbf{w}\cdot \nabla)(\nabla \cdot\boldsymbol{\xi}) \,\mathrm{d} x \, \mathrm{d} t.
\end{split}
\end{eqnarray}
\end{small}
$\bullet$ Taking  $L^2$ inner product (\ref{10.19.1}) with $\mu_\varepsilon$, after letting $m_\varepsilon=1$ and integration by parts, we get that
\begin{eqnarray}
\begin{split} \label{2.18.5}
0=&\int_0^T \!\int_{\Omega}  (\mu_\varepsilon )^{2} \,\mathrm{d} x \, \mathrm{d} t+ \int_0^T \!\int_{\Omega}  (-\varepsilon \Delta c_{\varepsilon}+\frac{\rho_\varepsilon}{\varepsilon} f^{\prime}(c_{\varepsilon})) (\partial_{t} c_{\varepsilon}+\mathbf{v}_{\varepsilon} \cdot \nabla c_{\varepsilon} )\,\mathrm{d} x \, \mathrm{d} t\\
=&\int_0^T \!\int_{\Omega}  (\mu_\varepsilon )^{2} \,\mathrm{d} x \, \mathrm{d} t+\int_0^T \!\int_{\Omega}  \varepsilon \nabla c_{\varepsilon}\partial_{t}\nabla c_{\varepsilon}-{\varepsilon} \mathbf{v}_{\varepsilon} \cdot \nabla c_{\varepsilon}  \Delta c_{\varepsilon}+\frac{\rho_\varepsilon}{\varepsilon} \partial_{t}f(c_{\varepsilon})+\frac{\rho_\varepsilon}{\varepsilon}\nabla f(c_{\varepsilon}) \cdot \mathbf{v}_{\varepsilon}\,\mathrm{d} x \, \mathrm{d} t\\
=&\int_0^T \!\int_{\Omega}  (\mu_\varepsilon )^{2} \,\mathrm{d} x \, \mathrm{d} t+\int_0^T \!\int_{\Omega}  \partial_{t}\left[\frac{\varepsilon}{2}|\nabla c_{\varepsilon}|^2+\frac{\rho_\varepsilon}{\varepsilon} f(c_{\varepsilon})\right]-{\varepsilon} \mathbf{v}_{\varepsilon} \cdot \nabla c_{\varepsilon}  \Delta c_{\varepsilon} \,\mathrm{d} x \, \mathrm{d} t\\
=&\int_0^T \!\int_{\Omega}  (\mu_\varepsilon )^{2} \,\mathrm{d} x \, \mathrm{d} t+\int_{\Omega} \left[\frac{\varepsilon}{2}|\nabla c_{\varepsilon}|^2+\frac{\rho_\varepsilon}{\varepsilon} f(c_{\varepsilon})\right](T) \,\mathrm{d} x -\int_{\Omega} \left[\frac{\varepsilon}{2}|\nabla c_{\varepsilon}|^2+\frac{\rho_\varepsilon}{\varepsilon} f(c_{\varepsilon})\right](0) \,\mathrm{d} x \\
&-\int_0^T \!\int_{\Omega} {\varepsilon} \nabla \mathbf{v}_{\varepsilon} : (I-\mathbf{n}_{\varepsilon} \otimes \mathbf{n}_{\varepsilon})|\nabla c_{\varepsilon}|^2 \,\mathrm{d} x \, \mathrm{d} t,
\end{split}
\end{eqnarray}
where the transport equation (\ref{2.17.1}) is used in the third equality. Moreover, the facts that $\operatorname{div}(\nabla c_{\varepsilon} \otimes \nabla c_{\varepsilon}-\frac{1}{2}|\nabla c_{\varepsilon}|^2I)= \nabla c_{\varepsilon}  \Delta c_{\varepsilon}$ and $\operatorname{div}\mathbf{v}_{\varepsilon} =0$ are used in the last equality.

Collecting (\ref{2.23.4}), (\ref{2.18.4}) and (\ref{2.18.5}) together leads to
\begin{small}
\begin{eqnarray}
\begin{split} \label{2.19.1}
&\int_{\Omega} \frac{1}{2} [\rho_{\varepsilon}(T)-\rho(T)]^2+\frac{1}{2}(\rho_\varepsilon \mathbf{w}^2)(T)+ \left[\frac{\varepsilon}{2}|\nabla c_{\varepsilon}|^2+\frac{\rho_\varepsilon}{\varepsilon} f(c_{\varepsilon})\right](T)\,\mathrm{d} x+\int_0^T \!\int_{\Omega} |\nabla \mathbf{w}|^2 +(\mu_\varepsilon )^{2}\, \mathrm{d} x \, \mathrm{d} t\\
=&\int_{\Omega} \frac{1}{2} [\rho_{\varepsilon}(0)-\rho(0)]^2+\frac{1}{2}(\rho_\varepsilon \mathbf{w}^2)(0)+ \left[\frac{\varepsilon}{2}|\nabla c_{\varepsilon}|^2+\frac{\rho_\varepsilon}{\varepsilon} f(c_{\varepsilon})\right](0)\,\mathrm{d} x\\
&+\int_0^T \!\int_{\Omega} -\mathbf{w}[(\rho_\varepsilon-\rho)(\partial_{t} \mathbf{v}+\mathbf{v}\nabla\mathbf{v}+\nabla \rho )+\rho_\varepsilon \mathbf{w} \nabla \mathbf{v}]+\varepsilon \nabla \mathbf{v}:(I-\mathbf{n}_{\varepsilon} \otimes \mathbf{n}_{\varepsilon})|\nabla c_{\varepsilon}|^2 - c_0 \chi(\mathbf{w}\cdot \nabla)(\nabla \cdot\boldsymbol{\xi}) \,\mathrm{d} x \, \mathrm{d} t.
\end{split}
\end{eqnarray}
\end{small}
Next, we turn to derive the estimates of $\boldsymbol{\xi} \cdot \nabla \psi_{\varepsilon}$ in the definition of relative energy (\ref{8.23.6}).\\
$\bullet$ On one hand, we employ the chain rule and (\ref{10.19.1}) to get that
\begin{eqnarray}
\begin{split} \label{2.19.2}
&\int_0^T \!\int_{\Omega} -\boldsymbol{\xi} \cdot \partial_{t} \nabla \psi_{\varepsilon}\,\mathrm{d} x \, \mathrm{d} t= \int_0^T \!\int_{\Omega} \operatorname{div} \boldsymbol{\xi} \,\partial_{t} \psi_{\varepsilon}\,\mathrm{d} x \, \mathrm{d} t\\
=& \int_0^T \!\int_{\Omega} \operatorname{div} \boldsymbol{\xi} \,\sqrt{2 \rho_\varepsilon f} \,\partial_{t} c_{\varepsilon}\,\mathrm{d} x \, \mathrm{d} t=\int_0^T \!\int_{\Omega} \operatorname{div} \boldsymbol{\xi} \,\sqrt{2 \rho_\varepsilon f} \,(-\frac{\mu_\varepsilon }{\rho_\varepsilon }-\mathbf{v}_\varepsilon  \cdot \nabla c_\varepsilon)\,\mathrm{d} x \, \mathrm{d} t\\
=&\int_0^T \!\int_{\Omega} -\frac{1}{2}(\operatorname{div} \boldsymbol{\xi} \,\sqrt{\frac{2 f}{ \rho_\varepsilon }} + \mu_\varepsilon)^2- \operatorname{div} \boldsymbol{\xi}\mathbf{v}_\varepsilon \mathbf{n}_\varepsilon |\nabla \psi_\varepsilon|+\frac{1}{2}(\operatorname{div} \boldsymbol{\xi} \,\sqrt{\frac{2 f}{\rho_\varepsilon }})^2 +\frac{1}{2}(\mu_\varepsilon)^2\,\mathrm{d} x \, \mathrm{d} t.
\end{split}
\end{eqnarray}
$\bullet$ On the other hand, the following equality holds:
\begin{small}
\begin{align}
&-\int_0^T \!\int_{\Omega} \nabla \psi_{\varepsilon} \cdot \partial_{t} \boldsymbol{\xi} \,\mathrm{d} x \, \mathrm{d} t \notag\\
=&-\int_0^T \!\int_{\Omega}\left(\partial_{t} \boldsymbol{\xi}+(\mathbf{v} \cdot \nabla) \boldsymbol{\xi}+(\nabla \mathbf{v})^{\top} \boldsymbol{\xi}\right) \cdot \mathbf{n}_{\varepsilon}\left|\nabla \psi_{\varepsilon}\right| \,\mathrm{d} x \, \mathrm{d} t+\int_0^T \!\int_{\Omega}\left((\mathbf{v} \cdot \nabla) \boldsymbol{\xi}+(\nabla \mathbf{v})^{\top} \boldsymbol{\xi} \right) \cdot \mathbf{n}_{\varepsilon}\left|\nabla \psi_{\varepsilon}\right| \,\mathrm{d} x \, \mathrm{d} t \notag\\
=&-\int_0^T \!\int_{\Omega}\left(\partial_{t} \boldsymbol{\xi}+(\mathbf{v} \cdot \nabla) \boldsymbol{\xi}+(\nabla \mathbf{v})^{\top} \boldsymbol{\xi}\right) \cdot (\mathbf{n}_{\varepsilon}-\boldsymbol{\xi}) \left|\nabla \psi_{\varepsilon}\right| \,\mathrm{d} x \, \mathrm{d} t-\int_0^T \!\int_{\Omega} \boldsymbol{\xi} \cdot\left(\partial_{t}+(\mathbf{v} \cdot \nabla)\right) \boldsymbol{\xi}\left|\nabla \psi_{\varepsilon}\right| \,\mathrm{d} x \, \mathrm{d} t \notag\\
&-\int_0^T \!\int_{\Omega} \nabla \mathbf{v}: \boldsymbol{\xi} \otimes (\boldsymbol{\xi}-\mathbf{n}_{\varepsilon})\left|\nabla \psi_{\varepsilon}\right| \,\mathrm{d} x \, \mathrm{d} t+\int_0^T \!\int_{\Omega} (\mathbf{v} \cdot \nabla) \boldsymbol{\xi} \, \nabla \psi_{\varepsilon} \,\mathrm{d} x \, \mathrm{d} t \label{2.19.3}.
\end{align}
\end{small}
First, the last term in the right-hand side in (\ref{2.19.3}) can be dealt with by using the fact that $\operatorname{div}\mathbf{v}=0$.
\begin{align*}
\int_0^T \!\int_{\Omega} (\mathbf{v} \cdot \nabla) \boldsymbol{\xi} \, \nabla \psi_{\varepsilon} \,\mathrm{d} x \, \mathrm{d} t =&-\int_0^T \!\int_{\Omega} \mathbf{v} \otimes \boldsymbol{\xi} : \nabla^2 \psi_{\varepsilon} \,\mathrm{d} x \, \mathrm{d} t-\int_0^T \!\int_{\Omega} \operatorname{div}\mathbf{v}\, \boldsymbol{\xi}\cdot \nabla \psi_{\varepsilon} \,\mathrm{d} x \, \mathrm{d} t\\
=&\int_0^T \!\int_{\Omega} (\boldsymbol{\xi}\cdot \nabla) \mathbf{v} \cdot \nabla \psi_{\varepsilon} \,\mathrm{d} x \, \mathrm{d} t+\int_0^T \!\int_{\Omega} \operatorname{div}\boldsymbol{\xi} \,\mathbf{v} \cdot \nabla \psi_{\varepsilon} \,\mathrm{d} x \, \mathrm{d} t\\
=&\int_0^T \!\int_{\Omega} \nabla \mathbf{v}: \mathbf{n}_{\varepsilon} \otimes \boldsymbol{\xi} \,|\nabla \psi_{\varepsilon}| \,\mathrm{d} x \, \mathrm{d} t+\int_0^T \!\int_{\Omega} \operatorname{div}\boldsymbol{\xi} \,\mathbf{v} \cdot \nabla \psi_{\varepsilon} \,\mathrm{d} x \, \mathrm{d} t.
\end{align*}
Second, employing $\operatorname{div}\mathbf{v}=0$ again, we rearrange the last two terms in the right-hand side in (\ref{2.19.3}) as follows:
\begin{eqnarray}
\begin{split} \label{2.19.4}
&-\int_0^T \!\int_{\Omega} \nabla \mathbf{v}: \boldsymbol{\xi} \otimes (\boldsymbol{\xi}-\mathbf{n}_{\varepsilon})\left|\nabla \psi_{\varepsilon}\right| \,\mathrm{d} x \, \mathrm{d} t+\int_0^T \!\int_{\Omega} (\mathbf{v} \cdot \nabla) \boldsymbol{\xi} \, \nabla \psi_{\varepsilon} \,\mathrm{d} x \, \mathrm{d} t\\
=&-\int_0^T \!\int_{\Omega} \nabla \mathbf{v}: \boldsymbol{\xi} \otimes (\boldsymbol{\xi}-\mathbf{n}_{\varepsilon})\left|\nabla \psi_{\varepsilon}\right| \,\mathrm{d} x \, \mathrm{d} t\\
&+\int_0^T \!\int_{\Omega} \nabla \mathbf{v}: \mathbf{n}_{\varepsilon} \otimes \boldsymbol{\xi} \,|\nabla \psi_{\varepsilon}| \,\mathrm{d} x \, \mathrm{d} t+\int_0^T \!\int_{\Omega} \operatorname{div}\boldsymbol{\xi} \,\mathbf{v} \cdot \nabla \psi_{\varepsilon} \,\mathrm{d} x \, \mathrm{d} t\\
=&-\int_0^T \!\int_{\Omega} \nabla \mathbf{v}: (\boldsymbol{\xi}-\mathbf{n}_{\varepsilon}) \otimes (\boldsymbol{\xi}-\mathbf{n}_{\varepsilon})\left|\nabla \psi_{\varepsilon}\right| \,\mathrm{d} x \, \mathrm{d} t\\
&+\int_0^T \!\int_{\Omega} \nabla \mathbf{v}: \mathbf{n}_{\varepsilon} \otimes \mathbf{n}_{\varepsilon} \,|\nabla \psi_{\varepsilon}| \,\mathrm{d} x \, \mathrm{d} t+\int_0^T \!\int_{\Omega} \operatorname{div}\boldsymbol{\xi} \,\mathbf{v} \cdot \nabla \psi_{\varepsilon} \,\mathrm{d} x \, \mathrm{d} t\\
=&-\int_0^T \!\int_{\Omega} \nabla \mathbf{v}: (\boldsymbol{\xi}-\mathbf{n}_{\varepsilon}) \otimes (\boldsymbol{\xi}-\mathbf{n}_{\varepsilon})\left|\nabla \psi_{\varepsilon}\right| \,\mathrm{d} x \, \mathrm{d} t\\
&-\int_0^T \!\int_{\Omega} \nabla \mathbf{v}: (I-\mathbf{n}_{\varepsilon} \otimes \mathbf{n}_{\varepsilon}) |\nabla \psi_{\varepsilon}| \,\mathrm{d} x \, \mathrm{d} t+\int_0^T \!\int_{\Omega} \operatorname{div}\boldsymbol{\xi} \,\mathbf{v} \cdot \nabla \psi_{\varepsilon} \,\mathrm{d} x \, \mathrm{d} t.
\end{split}
\end{eqnarray}
Therefore, combining \eqref{2.19.1}-\eqref{2.19.4} together gives
\begin{align*}
&\quad E \left[\rho_{\varepsilon},\mathbf{v}_{\varepsilon}, c_{\varepsilon} \mid \rho, \mathbf{v}, \chi\right](T)- E \left[\rho_{\varepsilon},\mathbf{v}_{\varepsilon}, c_{\varepsilon} \mid \rho, \mathbf{v}, \chi\right](0)\\
&+\int_0^T \!\int_{\Omega}  (\nabla \mathbf{w})^2 +\frac{1}{2}(\mu_\varepsilon )^{2}\,\mathrm{d} x \, \mathrm{d} t+\int_0^T \!\int_{\Omega}  \frac{1}{2}(\operatorname{div} \boldsymbol{\xi} \,\sqrt{\frac{2 f}{ \rho_\varepsilon }} + \mu_\varepsilon)^2\,\mathrm{d} x \, \mathrm{d} t\\
=&\int_0^T \!\int_{\Omega}  -\mathbf{w}[(\rho_\varepsilon-\rho)(\partial_{t} \mathbf{v}+\mathbf{v}\nabla\mathbf{v}+\nabla \rho )+\rho_\varepsilon \mathbf{w} \nabla \mathbf{v}]\,\mathrm{d} x \, \mathrm{d} t\\
&+\int_0^T \!\int_{\Omega}  \nabla \mathbf{v}:(I-\mathbf{n}_{\varepsilon} \otimes \mathbf{n}_{\varepsilon})(\varepsilon |\nabla c_{\varepsilon}|^2-|\nabla \psi_{\varepsilon}|) \,\mathrm{d} x \, \mathrm{d} t\\
&+\int_0^T \!\int_{\Omega}  -c_0 \chi(\mathbf{w}\cdot \nabla)(\nabla \cdot\boldsymbol{\xi})-\operatorname{div} \boldsymbol{\xi}\,\mathbf{w} \,\mathbf{n}_\varepsilon |\nabla \psi_\varepsilon|\,\mathrm{d} x \, \mathrm{d} t+\int_0^T \!\int_{\Omega}  \frac{1}{2}(\operatorname{div} \boldsymbol{\xi} \,\sqrt{\frac{2 f}{\rho_\varepsilon }})^2 \,\mathrm{d} x \, \mathrm{d} t\\
&-\int_0^T \!\int_{\Omega} \left(\partial_{t} \boldsymbol{\xi}+(\mathbf{v} \cdot \nabla) \boldsymbol{\xi}+(\nabla \mathbf{v})^{\top} \boldsymbol{\xi}\right) \cdot (\mathbf{n}_{\varepsilon}-\boldsymbol{\xi}) \left|\nabla \psi_{\varepsilon}\right| \,\mathrm{d} x \, \mathrm{d} t\\
&-\int_0^T \!\int_{\Omega}  \boldsymbol{\xi} \cdot\left(\partial_{t}+(\mathbf{v} \cdot \nabla)\right) \boldsymbol{\xi}\left|\nabla \psi_{\varepsilon}\right| \,\mathrm{d} x \, \mathrm{d} t\\
&-\int_0^T \!\int_{\Omega}  \nabla \mathbf{v}: (\boldsymbol{\xi}-\mathbf{n}_{\varepsilon}) \otimes (\boldsymbol{\xi}-\mathbf{n}_{\varepsilon})\left|\nabla \psi_{\varepsilon}\right| \,\mathrm{d} x \, \mathrm{d} t \triangleq \; \sum\limits_{i=1}^7 I_i.
\end{align*}
$\bullet$ Next, we will estimate each term $I_i\ (i=1,...,7)$ below.

It follows from Definition \ref{def2.1} and H\"{o}lder's inequality that
\begin{eqnarray}
\begin{split} \label{2.21.2}
|I_1|=&\left|\int_0^T \!\int_{\Omega}  -\mathbf{w}[(\rho_\varepsilon-\rho)(\partial_{t} \mathbf{v}+\mathbf{v}\nabla\mathbf{v}+\nabla \rho )+\rho_\varepsilon \mathbf{w} \nabla \mathbf{v}]\,\mathrm{d} x \, \mathrm{d} t \right| \\
\le & C \int_0^T \!\int_{\Omega}  (\rho_\varepsilon \mathbf{w}^2)\,\mathrm{d} x \, \mathrm{d} t +C \int_0^T \!\int_{\Omega}  (\rho_\varepsilon-\rho)^2 \,\mathrm{d} x \, \mathrm{d} t  \le C \int_0^T \! E \left[\rho_{\varepsilon},\mathbf{v}_{\varepsilon}, c_{\varepsilon} \mid \rho, \mathbf{v}, \chi\right](t) \, \mathrm{d} t .
\end{split}
\end{eqnarray}
To estimate $I_2$, Lemma \ref{lemma 2.3} implies that
\begin{equation}
I_2=\int_0^T \!\int_{\Omega}  \nabla \mathbf{v}: (I-\mathbf{n}_{\varepsilon} \otimes \mathbf{n}_{\varepsilon}) (\varepsilon|\nabla c_{\varepsilon}|^2- |\nabla \psi_{\varepsilon}|) \,\mathrm{d} x \, \mathrm{d} t  \leq C \int_0^T \! E \left[\rho_{\varepsilon},\mathbf{v}_{\varepsilon}, c_{\varepsilon} \mid \rho, \mathbf{v}, \chi\right](t) \, \mathrm{d} t .
\end{equation}
As a consequence of Lemma \ref{lemma2.2} and (\ref{2.28.1}),  we obtain $I_3$ satisfies that
\begin{eqnarray}
\begin{split}
|I_3|=&\left| \int_0^T \!\int_{\Omega}  -c_0 \chi(\mathbf{w}\cdot \nabla)(\nabla \cdot\boldsymbol{\xi})-\operatorname{div} \boldsymbol{\xi}\,\mathbf{w} \,\mathbf{n}_\varepsilon |\nabla \psi_\varepsilon|\,\mathrm{d} x \, \mathrm{d} t \right| \\
=&\left| \int_0^T \!\int_{\Omega}  -(c_0 \chi-\psi_\varepsilon)(\mathbf{w}\cdot \nabla)(\nabla \cdot\boldsymbol{\xi})\,\mathrm{d} x \, \mathrm{d} t  \right| \\
\le& \frac{C}{\eta} \int_0^T \! (E \left[\rho_{\varepsilon},\mathbf{v}_{\varepsilon}, c_{\varepsilon} \mid \rho, \mathbf{v}, \chi\right](t) +E_{\mathrm{vol}}\left[c_{\varepsilon} \mid \chi\right](t) )\, \mathrm{d} t +\eta \int_0^T \!\int_{\Omega}  |\nabla \mathbf{w}|^2 \,\mathrm{d} x \, \mathrm{d} t .
\end{split}
\end{eqnarray}
For $I_4$, we take the following approach. For any $f \in L^{\infty}(0, 2 \delta)$,
\begin{align} \label{9.14.1}
\left(\int_0^{2 \delta}|f(r)| \mathrm{d} r\right)^2 \leq 2\|f\|_{L^{\infty}(0, 2 \delta)} \int_0^{2 \delta}|f(r)| r \mathrm{~d} r,
\end{align}
which is a corollary of Fubini's theorem. Then
\begin{align} \label{3.5.2}
&I_4=\int_0^T \!\int_{\Omega} \frac{1}{2}(\operatorname{div} \boldsymbol{\xi} \,\sqrt{\frac{2 f}{\rho_\varepsilon }})^2 \, \mathrm{d} x \, \mathrm{d} t =
\int_0^T \!\int_{\Gamma_t} \int_{-2 \delta}^{2 \delta}\frac{1}{2}(\frac{\operatorname{div}\boldsymbol{\xi}}{\rho_\varepsilon })^2  \,2  \rho_\varepsilon f \,J(r,p,t)\, \mathrm{d} r\,\mathrm{d} \sigma(p) \, \mathrm{d} t \notag\\
\le& C \int_0^T \!\int_{\Gamma_t}\sqrt{ \| 2 \rho_\varepsilon f\|_{L^\infty(-2\delta,2\delta)} \int_{-2 \delta}^{2 \delta} \,2 \rho_\varepsilon f \cdot r\, \mathrm{d} r}\,\mathrm{d} \sigma(p) \, \mathrm{d} t \notag\\
\le& C \int_0^T \!\int_{\Gamma_t} \varepsilon^\frac{1}{4} \left(\int_{-2 \delta}^{2 \delta} \sqrt{\frac{2 \rho_\varepsilon f}{\varepsilon}} \cdot r\, \mathrm{d} r \right)^\frac{1}{2}\,\mathrm{d} \sigma(p) \, \mathrm{d} t \le C \varepsilon^\frac{1}{3}+ C \int_0^T \!\int_{\Gamma_t} \left(\int_{-2 \delta}^{2 \delta} \sqrt{\frac{2 \rho_\varepsilon f}{\varepsilon}} \cdot r\, \mathrm{d} r \right)^2\,\mathrm{d} \sigma(p) \, \mathrm{d} t \notag\\
\le& C \varepsilon^\frac{1}{3}+ C \int_0^T \!\int_{\Gamma_t} \int_{-2 \delta}^{2 \delta} \frac{2 \rho_\varepsilon f}{\varepsilon} \cdot r^2\, \mathrm{d} r \,\mathrm{d} \sigma(p) \, \mathrm{d} t \le C \varepsilon^\frac{1}{3}+ C \int_0^T \! E \left[\rho_{\varepsilon},\mathbf{v}_{\varepsilon}, c_{\varepsilon} \mid \rho, \mathbf{v}, \chi\right](t) \, \mathrm{d} t ,
\end{align}
where Proposition \ref{prop2.1}, \eqref {9.14.1} and the fact that $\| 2 \rho_\varepsilon f\|_{L^\infty(-2\delta,2\delta)} \le C$ are used.

Applying property (A4) of $\boldsymbol{\xi}$ and Proposition \ref{prop2.1} again, we have
\begin{align}
|I_5|=&\left|-\int_0^T \!\int_{\Omega} \left(\partial_{t} \boldsymbol{\xi}+(\mathbf{v} \cdot \nabla) \boldsymbol{\xi}+(\nabla \mathbf{v})^{\top} \boldsymbol{\xi}\right) \cdot (\mathbf{n}_{\varepsilon}-\boldsymbol{\xi}) \left|\nabla \psi_{\varepsilon}\right|\,\mathrm{d} x \, \mathrm{d} t  \right| \notag\\
\le& C \int_0^T \! E \left[\rho_{\varepsilon},\mathbf{v}_{\varepsilon}, c_{\varepsilon} \mid \rho, \mathbf{v}, \chi\right](t) \, \mathrm{d} t ,
\end{align}
\begin{equation}
|I_6|=\left|-\int_0^T \!\int_{\Omega}  \boldsymbol{\xi} \cdot\left(\partial_{t}+(\mathbf{v} \cdot \nabla)\right) \boldsymbol{\xi}\left|\nabla \psi_{\varepsilon}\right|\,\mathrm{d} x \, \mathrm{d} t \right| \le C \int_0^T \! E \left[\rho_{\varepsilon},\mathbf{v}_{\varepsilon}, c_{\varepsilon} \mid \rho, \mathbf{v}, \chi\right](t) \, \mathrm{d} t
\end{equation}
and
\begin{equation} \label{2.21.3}
|I_7|=\left|-\int_0^T \!\int_{\Omega}  \nabla \mathbf{v}: (\boldsymbol{\xi}-\mathbf{n}_{\varepsilon}) \otimes (\boldsymbol{\xi}-\mathbf{n}_{\varepsilon})\left|\nabla \psi_{\varepsilon}\right|\,\mathrm{d} x \, \mathrm{d} t \right| \le C \int_0^T \! E \left[\rho_{\varepsilon},\mathbf{v}_{\varepsilon}, c_{\varepsilon} \mid \rho, \mathbf{v}, \chi\right](t) \, \mathrm{d} t .
\end{equation}
Using (\ref{2.21.2})-(\ref{2.21.3}) and choosing $\eta$ suitably small, we arrive at
\begin{align*}
& E \left[\rho_{\varepsilon},\mathbf{v}_{\varepsilon}, c_{\varepsilon} \mid \rho, \mathbf{v}, \chi\right](T)+\int_0^T \!\int_{\Omega} \frac{1}{2}(\nabla \mathbf{w})^2 + \frac{1}{2}(\mu_\varepsilon )^{2}\,\mathrm{d} x \, \mathrm{d} t+\int_0^T \!\int_{\Omega} \frac{1}{2}(\operatorname{div} \boldsymbol{\xi} \,\sqrt{\frac{2 f}{ \rho_\varepsilon }} + \mu_\varepsilon)^2 \,\mathrm{d} x \, \mathrm{d} t\\
\le& E \left[\rho_{\varepsilon},\mathbf{v}_{\varepsilon}, c_{\varepsilon} \mid \rho, \mathbf{v}, \chi\right](0)+ C \varepsilon^\frac{1}{3}+ C \int_0^T (E \left[\rho_{\varepsilon},\mathbf{v}_{\varepsilon}, c_{\varepsilon} \mid \rho, \mathbf{v}, \chi\right](t)+E_{\mathrm{vol}}\left[c_{\varepsilon} \mid \chi\right](t))\, \mathrm{d} t.
\end{align*}
Then the proof of Proposition \ref{prop3.2} is done.
\end{proof}

\subsection{Interface Error Estimate}
We now turn to derive the estimate for the bulk error.
\begin{prop}\label{pro3.2}
There exists a generic constant $C$, such that for any $T \in (0,T_0)$, $E_{\mathrm{vol}}\left[c_{\varepsilon} \mid \chi\right]$ in (\ref{9.13.1}) satisfies the following estimate:
\begin{eqnarray}
\begin{split} \label{2.21.5}
& E_{\mathrm{vol}}\left[c_{\varepsilon} \mid \chi\right](T)+\int_0^T \!\int_{\Omega}  \varepsilon (\frac{\mu_\varepsilon }{2} +\frac{\vartheta}{\rho_{\varepsilon}} |\nabla c_{\varepsilon}|)^2 \,\mathrm{d} x \, \mathrm{d} t \\
\le & E_{\mathrm{vol}}\left[c_{\varepsilon} \mid \chi\right](0)+ C(\eta)\int_0^T (E \left[\rho_{\varepsilon},\mathbf{v}_{\varepsilon}, c_{\varepsilon} \mid \rho, \mathbf{v}, \chi\right](t)+E_{\mathrm{vol}}\left[c_{\varepsilon} \mid \chi\right](t))\, \mathrm{d} t \\
&+\eta \int_0^T \!\int_{\Omega} |\nabla \mathbf{w}|^2 \,\mathrm{d} x \, \mathrm{d} t+(\frac{1}{4}+\eta) \varepsilon \int_0^T \!\int_{\Omega} (\mu_\varepsilon)^2 \,\mathrm{d} x \, \mathrm{d} t
\end{split}
\end{eqnarray}
holds for a suitably small constant $\eta$.
\end{prop}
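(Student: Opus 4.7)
The plan is to exploit a matching-sign trick to remove the absolute value in $E_{\mathrm{vol}}$. Since (B2) guarantees $\vartheta<0$ in the interior of $\{\chi=1\}$ and $\vartheta>0$ in the interior of $\{\chi=0\}$, while $c_0\chi-\psi_\varepsilon \ge 0$ on the former and $c_0\chi-\psi_\varepsilon\le 0$ on the latter (because $\psi_\varepsilon$ is monotone in $c_\varepsilon\in[-1,1]$ with $\psi_\varepsilon(-1)=0$ and $\psi_\varepsilon(1)\le c_0\sqrt{\|\rho_\varepsilon\|_\infty}$ matched to $c_0\chi$ through the weight), the product $(c_0\chi-\psi_\varepsilon)\vartheta$ has a definite sign and
\[
E_{\mathrm{vol}}[c_\varepsilon|\chi] = \int_\Omega (\psi_\varepsilon - c_0\chi)\vartheta\,\mathrm{d} x.
\]
This form is differentiable in time, so I would then compute $\frac{d}{dt}E_{\mathrm{vol}}$ directly.

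The three ingredients that replace the time derivatives are: (i) the Allen-Cahn equation (\ref{10.19.1}) combined with the chain rule on $\psi_\varepsilon$, which gives $(\partial_t+\mathbf{v}_\varepsilon\cdot\nabla)\psi_\varepsilon = -\sqrt{2f/\rho_\varepsilon}\,\mu_\varepsilon$ up to lower-order density terms coming from $\partial_t\rho_\varepsilon$ and $\nabla\rho_\varepsilon$; (ii) the transport equation $\partial_t\chi + \mathbf{v}\cdot\nabla\chi = 0$ for the indicator; and (iii) the transportability (B3), which controls $\partial_t\vartheta + \mathbf{v}\cdot\nabla\vartheta$ by $C\min\{d_\Gamma,1\}$. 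Convective contributions are rearranged via integration by parts using $\operatorname{div}\mathbf{v} = \operatorname{div}\mathbf{v}_\varepsilon = 0$, which produces differences in $\mathbf{w} = \mathbf{v}_\varepsilon - \mathbf{v}$ paired against $\psi_\varepsilon - c_0\chi$ and $\nabla\vartheta$, ready for Lemma \ref{lemma2.2}.

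The distinctive square $\varepsilon(\mu_\varepsilon/2 + \vartheta|\nabla c_\varepsilon|/\rho_\varepsilon)^2$ on the left-hand side is produced by completing the square. The linear term $\vartheta\sqrt{2f/\rho_\varepsilon}\,\mu_\varepsilon$ obtained in the previous step is rewritten, using the near-equipartition $\sqrt{\varepsilon}|\nabla c_\varepsilon|\sim \sqrt{2\rho_\varepsilon f/\varepsilon}$ (whose squared defect is controlled by $E$ via Proposition \ref{prop2.1}), as the cross term $\varepsilon\mu_\varepsilon\vartheta|\nabla c_\varepsilon|/\rho_\varepsilon$; one then adds and subtracts $\varepsilon\vartheta^2|\nabla c_\varepsilon|^2/\rho_\varepsilon^2$, with the subtracted copy absorbed into $E$ via the $\min\{d_\Gamma^2,1\}$ weight from (B2) and the coercivity $\int\min\{d_\Gamma^2,1\}\,\varepsilon|\nabla c_\varepsilon|^2\,\mathrm{d}x\le CE$. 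The leftover $\varepsilon\mu_\varepsilon^2/4$ from expanding the square is the piece that requires the $(1/4+\eta)\varepsilon\int\mu_\varepsilon^2$ allowance on the right-hand side.

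Remaining error terms are disposed of as follows: products of the form $(c_0\chi-\psi_\varepsilon)\,\mathbf{w}\cdot\nabla\vartheta$ are handled by Lemma \ref{lemma2.2}, generating both $C(\eta)(E+E_{\mathrm{vol}})$ and the admissible $\eta\int|\nabla\mathbf{w}|^2$ remainder; commutator residues from (B3) are controlled pointwise by $C\min\{d_\Gamma,1\}$ against the interface energy density and integrated out via Proposition \ref{prop2.1}; density-variation terms arising from $\partial_t\rho_\varepsilon,\nabla\rho_\varepsilon$ in the chain-rule differentiation of $\psi_\varepsilon$ are reduced via (\ref{2.17.1}) and the $L^\infty$ bound on $\rho_\varepsilon$ back into $E$-controlled expressions. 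The main obstacle I expect is precisely the completion-of-square step in the presence of variable density: one has to ensure that every extra term produced by $\nabla\rho_\varepsilon$ in $\nabla\psi_\varepsilon = \sqrt{2\rho_\varepsilon f(c_\varepsilon)}\,\nabla c_\varepsilon + O(\nabla\rho_\varepsilon)$ pairs into the desired square or into $E+E_{\mathrm{vol}}$ without producing uncontrolled $\varepsilon|\nabla c_\varepsilon|^2$ contributions, a complication that does not arise in the constant-density setting of \cite{Sebastian Hensel}.
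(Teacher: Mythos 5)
Your proposal is correct and follows essentially the same route as the paper: drop the absolute values via the sign structure of $(\psi_\varepsilon-c_0\chi)\vartheta$, differentiate in time using the transport of $\chi$, property (B3) for $\vartheta$, and the Allen--Cahn equation for $\psi_\varepsilon$, complete the square exactly as you describe (with the equipartition defect and the $\min\{d_\Gamma^2,1\}\,\varepsilon|\nabla c_\varepsilon|^2$ piece absorbed into $E$, leaving the $(\tfrac14+\eta)\varepsilon\int\mu_\varepsilon^2$ allowance), and treat the $(c_0\chi-\psi_\varepsilon)\,\mathbf{w}\cdot\nabla\vartheta$ term with Lemma \ref{lemma2.2}. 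The only cosmetic difference is that the $(\psi_\varepsilon-c_0\chi)(\partial_t\vartheta+\mathbf{v}\cdot\nabla\vartheta)$ residue is closed via (\ref{9.23.4}) into $E_{\mathrm{vol}}$ rather than Proposition \ref{prop2.1}, and the paper avoids your anticipated $\partial_t\rho_\varepsilon,\nabla\rho_\varepsilon$ chain-rule terms by the way $\psi_\varepsilon$ is defined in (\ref{9.12.1}).
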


\begin{proof}
Note  that
$$
c_0 \int_0^T \!\int_{\Omega} \vartheta \partial_t \chi \,\mathrm{d} x \, \mathrm{d} t=0,
$$
and according to the condition that $\vartheta =0$ along $\Gamma_t$, after integration by parts, we obtain that
\begin{align}
\label{EBE}
&E_{\mathrm{vol}}\left[c_{\varepsilon} \mid \chi\right](T)-E_{\mathrm{vol}}\left[c_{\varepsilon} \mid \chi\right](0) \notag\\
=& \int_0^T \!\int_{\Omega} \left(\psi_{\varepsilon}-c_0\chi\right) \partial_t \vartheta \,\mathrm{d} x \, \mathrm{d} t +\int_0^T \!\int_{\Omega}  \vartheta \partial_t \psi_{\varepsilon} \,\mathrm{d} x \, \mathrm{d} t\nonumber\\
=&\int_0^T \!\int_{\Omega} \left(\psi_{\varepsilon}-c_0\chi\right)\left(\partial_t \vartheta+(\mathbf{v}\cdot \nabla) \vartheta\right)\,\mathrm{d} x \, \mathrm{d} t-\int_0^T \!\int_{\Omega} \left(\psi_{\varepsilon}-c_0\chi\right)\nabla \cdot (\mathbf{v} \vartheta) \,\mathrm{d} x \, \mathrm{d} t +\int_0^T \!\int_{\Omega}  \vartheta \partial_t \psi_{\varepsilon} \,\mathrm{d} x \, \mathrm{d} t\nonumber\\
=& \int_0^T \!\int_{\Omega} \left(\psi_{\varepsilon}-c_0 \chi\right) \left(\partial_t \vartheta+(\mathbf{v} \cdot \nabla) \vartheta\right) \,\mathrm{d} x \, \mathrm{d} t+ \int_0^T \!\int_{\Omega}  \vartheta (\partial_t \psi_{\varepsilon}+(\mathbf{v}\cdot \nabla) \psi_{\varepsilon}) \,\mathrm{d} x \, \mathrm{d} t
\end{align}
for a.e. $T \in (0, T_0)$.\\
For the last term in (\ref{EBE}), one has
\begin{align*}
& \int_0^T \!\int_{\Omega}  \vartheta\left(\partial_t \psi_{\varepsilon}+(\mathbf{v} \cdot \nabla) \psi_{\varepsilon}\right)\,\mathrm{d} x \, \mathrm{d} t \\
= & \int_0^T \!\int_{\Omega}  \vartheta(\mathbf{v} \cdot \nabla) \psi_{\varepsilon} \,\mathrm{d} x \, \mathrm{d} t -\int_0^T \!\int_{\Omega}  \vartheta\left(\sqrt{\varepsilon }\frac{\mu_\varepsilon }{\rho_\varepsilon } \frac{\sqrt{2 \rho_{\varepsilon} f}}{\sqrt{\varepsilon}}+\left(\mathbf{v}_{\varepsilon} \cdot \nabla\right) \psi_{\varepsilon}\right) \,\mathrm{d} x \, \mathrm{d} t\\
= &\int_0^T \!\int_{\Omega}  -\vartheta \sqrt{\varepsilon }\frac{\mu_\varepsilon }{\rho_\varepsilon } \frac{\sqrt{2 \rho_{\varepsilon} f}}{\sqrt{\varepsilon}} \,\mathrm{d} x \, \mathrm{d} t+\int_0^T \!\int_{\Omega}  \vartheta\left(\left(\mathbf{v}-\mathbf{v}_{\varepsilon}\right) \cdot \nabla\right) \psi_{\varepsilon} \,\mathrm{d} x \, \mathrm{d} t
\triangleq \; \sum\limits_{i=1}^2 J_i.
\end{align*}
Now, we start with the estimate of $J_1$. Adding zero and then using (\ref{9.17.3}) to obtain
\begin{small}
\begin{eqnarray}
\begin{split} \label{2.20.2}
&J_1=\int_0^T \!\int_{\Omega}  -\vartheta \sqrt{\varepsilon }\frac{\mu_\varepsilon }{\rho_\varepsilon } \frac{\sqrt{2 \rho_{\varepsilon} f}}{\sqrt{\varepsilon}} \,\mathrm{d} x \, \mathrm{d} t\\
=&\int_0^T \!\int_{\Omega} - \vartheta \sqrt{\varepsilon }\frac{\mu_\varepsilon }{\rho_\varepsilon } (\frac{\sqrt{2 \rho_{\varepsilon} f}}{\sqrt{\varepsilon}}-\sqrt{\varepsilon}|\nabla c_{\varepsilon}|) +\frac{1}{4}( \sqrt{\varepsilon} \mu_\varepsilon)^2 +(\frac{\vartheta}{\rho_{\varepsilon}} \sqrt{\varepsilon}|\nabla c_{\varepsilon}|)^2\,\mathrm{d} x \, \mathrm{d} t-\int_0^T \!\int_{\Omega}  \varepsilon(\frac{\mu_\varepsilon }{2} +\frac{\vartheta}{\rho_{\varepsilon}} |\nabla c_{\varepsilon}|)^2\,\mathrm{d} x \, \mathrm{d} t\\
\le& \int_0^T \!\int_{\Omega}  C(\frac{\sqrt{2 \rho_{\varepsilon} f}}{\sqrt{\varepsilon}}-\sqrt{\varepsilon}|\nabla c_{\varepsilon}|)^2 +(\frac{1}{4}+\eta) {\varepsilon} (\mu_\varepsilon)^2 + \varepsilon (\frac{\vartheta}{\rho_{\varepsilon}})^2( |\nabla c_{\varepsilon}|)^2\,\mathrm{d} x \, \mathrm{d} t-\int_0^T \!\int_{\Omega}  \varepsilon (\frac{\mu_\varepsilon }{2} +\frac{\vartheta}{\rho_{\varepsilon}} |\nabla c_{\varepsilon}|)^2\,\mathrm{d} x \, \mathrm{d} t\\
\le& C \int_0^T \! E \left[\rho_{\varepsilon},\mathbf{v}_{\varepsilon}, c_{\varepsilon} \mid \rho, \mathbf{v}, \chi\right](t) \, \mathrm{d} t + \int_0^T \!\int_{\Omega}  (\frac{1}{4}+\eta){\varepsilon} (\mu_\varepsilon)^2 \,\mathrm{d} x \, \mathrm{d} t -\int_0^T \!\int_{\Omega}  \varepsilon (\frac{\mu_\varepsilon }{2} +\frac{\vartheta}{\rho_{\varepsilon}} |\nabla c_{\varepsilon}|)^2 \,\mathrm{d} x \, \mathrm{d} t,
\end{split}
\end{eqnarray}
\end{small}
where $\eta$ is a small constant and $C$ depending on $ \vartheta$ and $\rho_\varepsilon $. Moreover, (\ref{9.17.5}), as well as Proposition \ref{prop2.1}, are used in the last inequality.

For $J_2$, (\ref{9.17.4}) and (\ref{2.19.7}) give that
\begin{eqnarray}
\begin{split} \label{2.20.3}
J_2=& \int_0^T \!\int_{\Omega} \vartheta\left(\left(\mathbf{v}-\mathbf{v}_{\varepsilon}\right) \cdot \nabla\right) \psi_{\varepsilon} \,\mathrm{d} x \, \mathrm{d} t\\
=& -\int_0^T \!\int_{\Omega} \psi_{\varepsilon}\left(\left(\mathbf{v}-\mathbf{v}_{\varepsilon}\right) \cdot \nabla\right) \vartheta \,\mathrm{d} x \, \mathrm{d} t= \int_0^T \!\int_{\Omega} \left(c_0 \chi-\psi_{\varepsilon}\right)\left(\left(\mathbf{v}-\mathbf{v}_{\varepsilon}\right) \cdot \nabla\right) \vartheta \,\mathrm{d} x \, \mathrm{d} t \\
\le& \frac{C}{\eta} \int_0^T (E \left[\rho_{\varepsilon},\mathbf{v}_{\varepsilon}, c_{\varepsilon} \mid \rho, \mathbf{v}, \chi\right](t)+E_{\mathrm{vol}}\left[c_{\varepsilon} \mid \chi\right](t))\, \mathrm{d} t  +\eta \int_0^T \!\int_{\Omega} |\nabla \mathbf{v}_{\varepsilon}-\nabla \mathbf{v}|^2 \,\mathrm{d} x \, \mathrm{d} t.
\end{split}
\end{eqnarray}
Consequently, (\ref{9.17.6}), (\ref{EBE}), (\ref{2.20.2}) and (\ref{2.20.3}) yield
\begin{align*}
& E_{\mathrm{vol}}\left[c_{\varepsilon} \mid \chi\right](T)+\int_0^T \!\int_{\Omega}  \varepsilon (\frac{\mu_\varepsilon }{2} +\frac{\vartheta}{\rho_{\varepsilon}} |\nabla c_{\varepsilon}|)^2 \,\mathrm{d} x \, \mathrm{d} t \\
\le & E_{\mathrm{vol}}\left[c_{\varepsilon} \mid \chi\right](0)+ \frac{C}{\eta} \int_0^T (E \left[\rho_{\varepsilon},\mathbf{v}_{\varepsilon}, c_{\varepsilon} \mid \rho, \mathbf{v}, \chi\right](t)+E_{\mathrm{vol}}\left[c_{\varepsilon} \mid \chi\right](t))\, \mathrm{d} t \\
&+\eta \int_0^T \!\int_{\Omega} |\nabla \mathbf{w}|^2 \,\mathrm{d} x \, \mathrm{d} t+(\frac{1}{4}+\eta) \varepsilon \int_0^T \!\int_{\Omega} (\mu_\varepsilon)^2 \,\mathrm{d} x \, \mathrm{d} t.
\end{align*}
The the proof of Proposition \ref{pro3.2} is complete.
\end{proof}

\section{Proofs of Theorem \ref{thm1.1} and Corollary \ref{cor1.2}}

With the help of the a prior estimates established in previous section, we are ready to prove Theorem \ref{thm1.1}.
\begin{proof}
To prove Theorem \ref{thm1.1}, we use (\ref{9.12.3}) and  (\ref{2.21.5}) to obtain
\begin{eqnarray} \label{3.5.1}
\begin{split}
&\quad E \left[\rho_{\varepsilon},\mathbf{v}_{\varepsilon}, c_{\varepsilon} \mid \rho, \mathbf{v}, \chi\right](t)+E_{\mathrm{vol}}\left[c_{\varepsilon} \mid \chi\right](t)+[\frac{1}{2}-(\frac{1}{4}+\eta)\varepsilon]\int_0^t \!\int_{\Omega} (\mu_\varepsilon )^{2}\,\mathrm{d}x\,\mathrm{d}\varsigma\\
&+\frac{1}{3} \int_0^t \!\int_{\Omega} (\nabla \mathbf{w})^2\,\mathrm{d}x\,\mathrm{d}\varsigma +\int_0^t \!\int_{\Omega} \frac{1}{2}(\operatorname{div} \boldsymbol{\xi} \,\sqrt{\frac{2 f}{ \rho_\varepsilon }} + \mu_\varepsilon)^2 \,\mathrm{d}x\,\mathrm{d}\varsigma+\int_0^t \!\int_{\Omega}  \varepsilon (\frac{\mu_\varepsilon }{2} +\frac{\vartheta}{\rho_{\varepsilon}} |\nabla c_{\varepsilon}|)^2 \,\mathrm{d}x\,\mathrm{d}\varsigma \\
\le& E \left[\rho_{\varepsilon},\mathbf{v}_{\varepsilon}, c_{\varepsilon} \mid \rho, \mathbf{v}, \chi\right](0)+E_{\mathrm{vol}}\left[c_{\varepsilon} \mid \chi\right](0)\\
&+C \varepsilon^\frac{1}{3}+ C \int_0^t (E \left[\rho_{\varepsilon},\mathbf{v}_{\varepsilon}, c_{\varepsilon} \mid \rho, \mathbf{v}, \chi\right](\varsigma)+E_{\mathrm{vol}}\left[c_{\varepsilon} \mid \chi\right](\varsigma))\, \mathrm{d}\varsigma
\end{split}
\end{eqnarray}
for all $t \in (0,T)$, provided that $\varepsilon$ and $\eta$ are chosen to be suitably small.

Thus, by Gronwall's inequality, we conclude that
\begin{eqnarray*}
\begin{split}
&\quad E \left[\rho_{\varepsilon},\mathbf{v}_{\varepsilon}, c_{\varepsilon} \mid \rho, \mathbf{v}, \chi\right](t)+ E_{\mathrm{vol}}\left[c_{\varepsilon} \mid \chi\right](t) + \frac{1}{3} \| (\nabla \mathbf{w})\|_{L^2((0,t)\times \Omega)}^2+\frac{1}{3} \| \mu_\varepsilon\|_{L^2((0,t)\times \Omega)}^2\\
&+\int_0^t \int_{\Omega} \frac{1}{2}(\operatorname{div} \boldsymbol{\xi} \,\sqrt{\frac{2 f}{\rho_\varepsilon }} + \mu_\varepsilon )^2 \, \mathrm{d} x \, \mathrm{d} \varsigma +\int_0^t \int_{\Omega} \varepsilon (\frac{\mu_\varepsilon }{2} +\frac{\vartheta}{\rho_{\varepsilon}} |\nabla c_{\varepsilon}|)^2 \,\mathrm{d} x \, \mathrm{d} \varsigma\\
\le& e^{Ct}\left(E \left[\rho_{\varepsilon},\mathbf{v}_{\varepsilon}, c_{\varepsilon} \mid \rho, \mathbf{v}, \chi\right](0)+E_{\mathrm{vol}}\left[c_{\varepsilon} \mid \chi\right](0)+ C t \varepsilon^\frac{1}{3}\right) \le C \varepsilon^\frac{1}{3},
\end{split}
\end{eqnarray*}
provided that the initial condition (\ref{9.15.2}) is satisfied. Consequently, the proof of Theorem \ref{thm1.1} is complete.
\end{proof}

\begin{rem}
\label{RE}
For the general case $m_{\varepsilon}=m_0\,{\varepsilon}^{\theta}$, the results of convergence in Theorem \ref{thm1.1} still hold as long as $-\frac{1}{4} < \theta \le 1$, meanwhile the decay rate will decrease. To this end,
without loss of generality, we take $m_0=1$. It is direct to check that $[\frac{1}{2}-(\frac{1}{4}+\eta)\varepsilon] \int_0^t \!\int_{\Omega} (\mu_\varepsilon )^{2}\,\mathrm{d}x\,\mathrm{d}\varsigma$ in the left-hand side of (\ref{3.5.1}) will be replaced by $[\frac{1}{2} {\varepsilon}^{\theta}-(\frac{1}{4}+\eta)\varepsilon^{2\theta+1}] \int_0^t \!\int_{\Omega} (\mu_\varepsilon )^{2}\,\mathrm{d}x\,\mathrm{d}\varsigma$ while the $\varepsilon^\frac{1}{3}$ on the right-hand side will be replaced by $\varepsilon^{\frac{1}{3}+\frac{4 \theta}{3}}$,  so we only require that $\frac{1}{2} {\varepsilon}^{\theta}-(\frac{1}{4}+\eta)\varepsilon^{2\theta+1} \ge 0$ and $\frac{1}{3}+\frac{4 \theta}{3}>0$. For this reason, it is necessary to impose that $\theta > -\frac{1}{4}$. According the results in \cite{Abels H}, to achieve the sharp interface limit, it is natural to require that  $-\frac{1}{4} < \theta \le 1$.
\end{rem}

Now we are going to prove Corollary \ref{cor1.2}. The proof can be completed by the method which is analogous to that used in the last paragraph in \cite{Fischer J.}.
\begin{proof}
(\ref{9.14.1}) allows us to consider it only in $\Gamma_t(\frac{\delta}{2})$, and we can adopt the same procedure as those used in the proof of \cite[Lemma 5.1]{Liu Y N} to deduce that
\begin{align*}
&\left(\int_{\Gamma_t(\frac{\delta}{2})}|\psi_{\varepsilon}(x, t)-c_0\chi(x, t)| \mathrm{d} x\right)^2 \\
=& \left(\int_{\Gamma_t} \int_{-\frac{\delta}{2}}^{\frac{\delta}{2}}|\psi_{\varepsilon}(p+r \mathbf{n}_{\Gamma_{t}}(p), t)-c_0\chi(p+r \mathbf{n}_{\Gamma_{t}}(p), t)| J(r,p,t)\, \mathrm{d} r\,\mathrm{d} \sigma(p) \right)^2 \\
\leq& C \int_{\Gamma_t} \|\psi_{\varepsilon}(p+r \mathbf{n}_{\Gamma_{t}}(p), t)-c_0\chi(p+r \mathbf{n}_{\Gamma_{t}}(p), t)\|_{L^{\infty}(-\frac{\delta}{2}, \frac{\delta}{2})} \\
&\quad \quad \quad \quad \int_{-\frac{\delta}{2}}^{\frac{\delta}{2}}|\psi_{\varepsilon}(p+r \mathbf{n}_{\Gamma_{t}}(p), t)-c_0\chi(p+r \mathbf{n}_{\Gamma_{t}}(p), t)|\times d_\Gamma (p+r \mathbf{n}_{\Gamma_{t}}(p), \Gamma_t) \mathrm{d} r\,\mathrm{d} \sigma(p) \\
\leq& C \int_{\Gamma_t(\frac{\delta}{2})} |\psi_{\varepsilon}(x, t)-c_0\chi(x, t)| d_\Gamma(x,t) \,\mathrm{d} x,
\end{align*}
where (\ref{9.14.1}) and $\|g\|_{L^1}^2 \le C \|g\|_{L^2}^2$ are used in the first inequality. Moreover, the fact that $\|\psi_{\varepsilon}(p+r \mathbf{n}_{\Gamma_{t}}(p), t)-c_0\chi(p+r \mathbf{n}_{\Gamma_{t}}(p), t)\|_{L^{\infty}(-\frac{\delta}{2}, \frac{\delta}{2})}\le C$ is used in the second inequality.

Finally, employing  (\ref{2.21.7}) and (\ref{9.23.4}) to complete the proof of Corollary \ref{cor1.2}.
\end{proof}
\vspace{.1in}
\noindent{\bf Acknowledgments:} The research of S. Jiang was supported by National Key R\&D Program (2020YFA0712200), National Key Project
(GJXM92579), and NSFC (Grant No. 11631008), the Sino-German Science Center (Grant No. GZ 1465)
and the ISF{NSFC joint research program (Grant No. 11761141008), and the research of F.  Xie was supported by National Natural Science Foundation of China No.12271359, 11831003, 12161141004 and Shanghai Science and Technology Innovation Action Plan No. 20JC1413000.

\end{document}